\newtheorem{theorem}{Theorem}[section]
\newtheorem{proposition}[theorem]{Proposition}
\newtheorem{lemma}[theorem]{Lemma}
\newtheorem{corollary}[theorem]{Corollary}
\theoremstyle{definition}
\newcommand{\what}[1]{\widehat{#1}}
\newcommand{\norm}[1]{\left\|#1\right\|}
\newcommand{\inprod}[2]{\left.\left\langle#1\right|#2\right\rangle}
\newcommand{\spn}{\operatorname{span}}
\newcommand{\fA}{\mathcal{A}}
\newcommand{\fB}{\mathcal{B}}
\newcommand{\fC}{\mathcal{C}}
\newcommand{\fH}{\mathcal{H}}
\newcommand{\fK}{\mathcal{K}}
\newcommand{\fV}{\mathcal{V}}
\newcommand{\fX}{\mathcal{X}}
\newcommand{\fY}{\mathcal{Y}}
\newcommand{\Cee}{\mathbb{C}}
\newcommand{\En}{\mathbb{N}}
\newcommand{\Del}{\Delta}
\newcommand{\gam}{\gamma}
\newcommand{\Gam}{\Gamma}
\newcommand{\lam}{\lambda}
\newcommand{\Lam}{\Lambda}
\newcommand{\sig}{\sigma}
\newcommand{\mat}{\mathrm{M}}
\newcommand{\fal}{\mathrm{A}}
\newcommand{\falg}{\mathrm{A}(G)}
\newcommand{\fdelg}{\mathrm{A}_\Delta(G)}
\newcommand{\fgamg}{\mathrm{A}_\gamma(G)}
\newcommand{\matr}{\mathrm{M}}
\newcommand{\smat}{\mathrm{S}}
\newcommand{\tr}{\mathrm{Tr}}
\newcommand{\trans}{\mathrm{t}}
\newcommand{\trig}{\mathrm{Trig}}
\newcommand{\vn}{\mathrm{VN}}
\newcommand{\cent}{\mathrm{Z}}
\begin{document}

\title[Convolutions on  Fourier algebras]
{Convolutions on the Haagerup tensor products of Fourier algebras}
\author[M. Rostami and N. Spronk]
{Mehdi Rostami and Nico Spronk}

\begin{abstract}
We study the ranges of the maps of convolution $u\otimes v\mapsto u\ast v$ 
and a `twisted' convolution $u\otimes v\mapsto u\ast \check{v}$ ($\check{u}(s)=u(s^{-1})$)
and on the Haagerup tensor product of a Fourier algebra
of a compact group $\falg$ with itself.   We compare the results to result of factoring these
maps through projective and operator projective tensor products.  We notice
that $(\fal(G),\ast)$ is an operator algebra and observe an unexpected set of spectral synthesis.
\end{abstract}
 
\maketitle
 
\footnote{{\it Date}: \today.

2000 {\it Mathematics Subject Classification.} 46L07, 43A10, 43A30.
{\it Key words and phrases.} Fourier algebra, convolution, Haagerup tensor product

M. Rostami.
Faculty of Mathematical and Computer Science, Amirkabir University of Technology, 
424 Hafez Avenue, 15914 Tehran, Iran. {\tt mross@aut.ac.ir}

N. Spronk.
Department of Pure Mathematics, University of Waterloo, 200 University Avenue West,
Waterloo, Ontario, N2L 3G1, Canada. {\tt nspronk@uwaterloo.ca}

M. Rostami  conducted this work whilst visiting University of Waterloo.  
N. Spronk was partially supported by NSERC Grant 312515-2010.}

\section{Introduction}

Let $G$ be a compact group and $\fal(G)$ be its Fourier algebra, in the sense of \cite{eymard}.

In \cite{forrestss}, questions of the following nature were addressed:  what are the ranges
of convolution and `twisted' convolution, when applied to $\fal(G\times G)=\fal(G)\hat{\otimes}\fal(G)$
(operator projective tensor product).  The authors' motivation was two-fold.  First, these 
particular maps played a fundamental role in the famous result of B.~Johnson (\cite{johnson})
that $\fal(G)$ is sometimes non-amenable, and the authors were interested in seeing how these
techniques related to the completely bounded theory of Fourier algebras.  This perspective
led the authors to the results of \cite{forrestss2}. 
Secondly, it was observed is that `twisted' convolution
averages $\fal(G\times G)$ over left cosets of the diagonal subgroup $\Del=\{(s,s):s\in G\}$,
whereas convolution averages $\fal(G\times G)$ over orbits of the group action
$(r,(s,t))\mapsto (sr^{-1},rt):G\times(G\times G)\to G\times G$.  Thus the images may be
rightly regarded as Fourier algebras of certain homogeneous/orbit spaces of 
$G\times G$ in the sense
of \cite{forrest}.  The homogeneous space $G\times G/\Del$ and the orbit space
$G\times G/\check{\Del}$ may be naturally identified with $G$.  Thus we define
$\Gam,\check{\Gam}:\fC(G\times G)\to\fC(G)$ by
\[
\Gam u(s)=\int_Gu(sr,r)\,dr\text{ and }\check{\Gam}u(s)=\int_Gu(sr,r^{-1})\,dr.
\]
It is easy to check that $\Gam(u\otimes v)=u\ast\check{v}$ and $\check{\Gam}(u\otimes v)
=u\ast v$.

In \cite{effrosr1}, it was shown that the Haagerup tensor product
$\fal(G)\otimes^h\fal(G)$ is a Banach algebra.
By \cite[Thm.\ 2]{tomiyama} this algebra has spectrum $G\times G$.
We shall note, below, that $\fal(G)\otimes^h\fal(G)$ is, in fact, semi-simple, and may thus be 
regarded as an algebra of functions on $G\times G$.  Hence it is natural to ask whether
we discover anything new if we apply the maps $\Gam$ and $\check{\Gam}$ to 
$\fal(G)\otimes^h\fal(G)$.  While we gain no new spaces, we learn interesting
comparisons between $\fal(G)\otimes^h\fal(G)$, $\fal(G\times G)$ and 
$\fC(G)\otimes^h\fC(G)$.  See \S \ref{sec:comparison}, below.

\subsection{Some basic results}
We let for each $\pi$ in $\what{G}$, 
$\trig_\pi=\spn\{s\mapsto\inprod{\pi(s)\xi}{\eta}:\xi,\eta\in\fH_\pi\}$ and
$\trig(G)=\bigoplus_{\pi\in\what{G}}\trig_\pi$.  This has linear dual space
$\trig(G)'=\prod_{\pi\in\what{G}}\fB(\fH_\pi)$ via dual pairing
\begin{equation}\label{eq:trigdual}
\langle u, T\rangle=\sum_{\pi\in\what{G}}d_\pi\tr(\hat{u}(\pi)T_\pi)
\end{equation}
where $\hat{u}(\pi)=\int_Gu(s)\pi(s^{-1})\,ds$.  
If $T\in\trig(G)'$, we let $\check{T}$ be defined by
$\langle u,\check{T}\rangle=\langle \check{u},T\rangle$ in the duality (\ref{eq:trigdual}).
Here $\check{u}(s)=u(s^{-1})$.
The following is surely well-known, and recorded here for later convenience.

\begin{lemma}\label{lem:checkdual}
For $T\in\trig(G)'$, then for $\pi$ in $\what{G}$, $\check{T}_\pi={T_{\bar{\pi}}}^\trans$.
Here the conjugation and transpose are realised with respect to the same orthonormal basis
for $\fH_\pi$.
\end{lemma}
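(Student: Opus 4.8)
The plan is to reduce the claim to a single identity relating the Fourier transform of $\check{u}$ to that of $u$, after which the coefficients of $\check{T}$ can be read off from the non-degeneracy of the pairing (\ref{eq:trigdual}). By definition $\langle u,\check{T}\rangle=\langle\check{u},T\rangle$, so everything rests on computing $\widehat{\check{u}}(\pi)$; the remainder is bookkeeping with the sum over $\what{G}$ and with the trace.

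First I would compute, for $u\in\trig(G)$ and $\pi\in\what{G}$, that
\[
\widehat{\check{u}}(\pi)=\int_G u(s^{-1})\pi(s^{-1})\,ds=\int_G u(t)\pi(t)\,dt,
\]
by substituting $t=s^{-1}$ and using that Haar measure on the compact (hence unimodular) group $G$ is inversion-invariant. The one genuinely representation-theoretic input is the elementary unitarity identity, computed in a fixed orthonormal basis of $\fH_\pi$: since $\pi(t)$ is unitary,
\[
\pi(t)=\pi(t^{-1})^*=\overline{\pi(t^{-1})}^{\,\trans}=\bar{\pi}(t^{-1})^{\trans},
\]
where $\bar{\pi}(s)=\overline{\pi(s)}$ is the conjugate representation. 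Pulling the transpose out of the integral then gives the desired identity
\[
\widehat{\check{u}}(\pi)=\Bigl(\int_G u(t)\bar{\pi}(t^{-1})\,dt\Bigr)^{\trans}=\hat{u}(\bar{\pi})^{\trans}.
\]

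With this in hand, I would substitute into (\ref{eq:trigdual}) to obtain
\[
\langle u,\check{T}\rangle=\langle\check{u},T\rangle=\sum_{\pi\in\what{G}}d_\pi\tr\bigl(\hat{u}(\bar{\pi})^{\trans}T_\pi\bigr),
\]
then re-index the sum via the involution $\pi\mapsto\bar{\pi}$ of $\what{G}$ (which preserves $d_\pi$) and use that the trace is invariant under transposition and cyclic permutation to rewrite each summand as $d_\pi\tr(\hat{u}(\pi)T_{\bar{\pi}}^{\trans})$. Comparing the result with the expansion $\langle u,\check{T}\rangle=\sum_{\pi}d_\pi\tr(\hat{u}(\pi)\check{T}_\pi)$ of the pairing and invoking its non-degeneracy---concretely, taking $u\in\trig_\pi$ so that $\hat{u}(\pi)$ is an arbitrary element of $\fB(\fH_\pi)$ while all other Fourier coefficients vanish---forces $\check{T}_\pi=T_{\bar{\pi}}^{\trans}$ for every $\pi$. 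I do not expect any deep obstacle here; the only point demanding care is that conjugate, transpose and adjoint must all be taken in one and the same orthonormal basis, exactly as the statement stipulates, so that the identity $\pi(t)=\bar{\pi}(t^{-1})^{\trans}$ and the subsequent trace manipulations remain consistent.
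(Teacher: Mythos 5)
Your proof is correct and follows essentially the same route as the paper's: both hinge on the computation $\widehat{\check{u}}(\pi)=\int_G u(s)\pi(s)\,ds=\hat{u}(\bar{\pi})^{\trans}$ via unitarity, followed by moving the transpose onto $T_\pi$ inside the trace and re-indexing the sum over $\what{G}$ by $\pi\mapsto\bar{\pi}$. You merely spell out the final bookkeeping (trace invariance, $d_{\bar{\pi}}=d_\pi$, non-degeneracy of the pairing) that the paper compresses into ``we simply change $\pi$ to $\bar{\pi}$.''
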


\begin{proof}
We first compute that
\[
\what{\check{u}}(\pi)=\int_G u(s)\pi(s)\,ds=\int_G u(s)\bar{\pi}(s^{-1})^\trans\,ds=\hat{u}(\bar{\pi})^\trans.
\]
Hence $\tr(\what{\check{u}}(\pi)T_\pi)=\tr(\hat{u}(\bar{\pi}){T_\pi}^\trans)$.  Thus in
(\ref{eq:trigdual}) we simply change $\pi$ to $\bar{\pi}$.
\end{proof}

We will identify the left regular representation up to quasi-equivalence as
\[
\lam=\bigoplus_{\pi\in\what{G}}\pi\quad\text{on}\quad\fH=\ell^2\text{-}\bigoplus_{\pi\in\what{G}}\fH_\pi.
\]
It is standard that $\lam(G)$ is weak*-dense in $\trig(G)'$ in terms of the duality
(\ref{eq:trigdual}).  The von Neumann algebra generated by $\lam(G)$ is thus the operator space
direct product
\[
\vn(G)=\ell^\infty\text{-}\bigoplus_{\pi\in\what{G}}\fB(\fH_\pi).
\]
Observe that this algebra has centre $\cent\vn(G)
=\ell^\infty\text{-}\bigoplus_{\pi\in\what{G}}\Cee I_\pi$.
The following is also surely well-known.  Again, we provide a proof for convenience of the reader.

\begin{proposition}\label{prop:expectation}
For each $\pi$ in $\what{G}$, we have that $A\mapsto\int_G\pi(s^{-1})A\pi(s)\,ds:
\fB(\fH_\pi)\to\Cee I_\pi$ is a tracial expectation; hence
$\int_G\pi(s^{-1})A\pi(s)\;ds=\frac{1}{d_\pi}\tr(A)I_\pi$.  Thus
$T\mapsto \int_G\lam(s^{-1})T\lam(s)\,ds:\vn(G)\to\cent\vn(G)$ is a tracial expectation
given by 
\[
\int_G\lam(s^{-1})T\lam(s)\;ds=\bigoplus_{\pi\in\what{G}}\frac{1}{d_\pi}\tr(T_\pi)I_\pi.
\]
\end{proposition}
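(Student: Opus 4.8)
The plan is to prove the claim about a single $\pi$ first, and then assemble the direct-integral (direct-sum) statement from it. For fixed $\pi\in\what{G}$, consider the map $\Phi_\pi(A)=\int_G\pi(s^{-1})A\pi(s)\,ds$ on $\fB(\fH_\pi)$. First I would observe that $\Phi_\pi$ is a normal completely positive unital map, being an average (over the Haar integral) of the unitary conjugations $A\mapsto\pi(s^{-1})A\pi(s)$, each of which is a $*$-automorphism. Its range lands in the commutant of $\pi(G)$: for any fixed $t\in G$, a left-invariance of Haar measure gives $\pi(t^{-1})\Phi_\pi(A)\pi(t)=\Phi_\pi(A)$, so $\Phi_\pi(A)$ commutes with every $\pi(t)$. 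Since $\pi$ is irreducible, Schur's lemma forces $\pi(G)'=\Cee I_\pi$, hence $\Phi_\pi(A)\in\Cee I_\pi$.

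Next I would pin down the scalar. Because $\Phi_\pi(A)=c(A)I_\pi$ for some scalar $c(A)$, taking the (unnormalised) matrix trace of both sides and using the trace's invariance under the unitary conjugations inside the integral yields $\tr(A)=\tr(\Phi_\pi(A))=c(A)\tr(I_\pi)=c(A)d_\pi$, so $c(A)=\frac{1}{d_\pi}\tr(A)$. This establishes $\Phi_\pi(A)=\frac{1}{d_\pi}\tr(A)I_\pi$ and simultaneously verifies the two asserted properties: it is an expectation onto $\Cee I_\pi$ (it is the identity on scalars, since $\tr(I_\pi)=d_\pi$) and it is tracial, since $\tr(AB)=\tr(BA)$ gives $c(AB)=c(BA)$. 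Completely positive unital normality together with the projection property $\Phi_\pi^2=\Phi_\pi$ is exactly what "expectation" means here.

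Finally I would pass to the global statement on $\vn(G)=\ell^\infty\text{-}\bigoplus_{\pi\in\what{G}}\fB(\fH_\pi)$. Under the quasi-equivalence $\lam\cong\bigoplus_{\pi\in\what{G}}\pi$, conjugation by $\lam(s)$ decomposes coordinatewise as conjugation by $\pi(s)$ on each block $\fB(\fH_\pi)$. Hence for $T=\bigoplus_\pi T_\pi$ the integrand decomposes blockwise, and the integral distributes over the direct product (the integrand is uniformly bounded in $s$ by $\norm{T}$, so the coordinatewise integrals assemble into an element of the $\ell^\infty$-direct product), giving $\int_G\lam(s^{-1})T\lam(s)\,ds=\bigoplus_{\pi\in\what{G}}\Phi_\pi(T_\pi)=\bigoplus_{\pi\in\what{G}}\frac{1}{d_\pi}\tr(T_\pi)I_\pi$, which lands precisely in $\cent\vn(G)=\ell^\infty\text{-}\bigoplus_{\pi\in\what{G}}\Cee I_\pi$. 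The tracial-expectation properties then follow blockwise from the single-$\pi$ case.

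The main obstacle I anticipate is not any of the algebra above, which is routine, but the analytic justification of interchanging the Haar integral with the direct-product (weak*) structure: one must check that $s\mapsto\lam(s^{-1})T\lam(s)$ is weak*-integrable with the integral computed coordinatewise, and that the resulting family $\{\frac{1}{d_\pi}\tr(T_\pi)I_\pi\}_\pi$ is genuinely norm-bounded (it is, by $|\frac{1}{d_\pi}\tr(T_\pi)|\le\norm{T_\pi}\le\norm{T}$) so that it defines an element of $\vn(G)$ rather than merely of $\trig(G)'$. I would handle this by testing against the predual: for each coordinate functional the scalar integral is the elementary computation already done, and uniform boundedness guarantees the assembled object lies in the von Neumann algebra.
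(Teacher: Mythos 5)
Your proof is correct. The skeleton is the same as the paper's --- Schur's lemma shows the range of $A\mapsto\int_G\pi(s^{-1})A\pi(s)\,ds$ lies in $\pi(G)'=\Cee I_\pi$, and the global statement is assembled blockwise over $\what{G}$ --- but you identify the scalar differently, and more directly. The paper first proves the map is tracial by the substitution $s\mapsto ts$ in the Haar integral together with $\spn\pi(G)=\fB(\fH_\pi)$, concludes that $\omega$ is a tracial state, and then invokes uniqueness of the normalised trace on $\fB(\fH_\pi)$ to get $\omega=\frac{1}{d_\pi}\tr$. You instead apply $\tr$ to both sides of $\Phi_\pi(A)=c(A)I_\pi$ and use invariance of the trace under unitary conjugation to read off $c(A)=\frac{1}{d_\pi}\tr(A)$ at once; traciality of the expectation then falls out of the explicit formula rather than being an input to it. Your route is shorter and entirely elementary in finite dimensions; the paper's route is the one that survives in settings where one cannot simply evaluate an unnormalised trace on both sides (e.g.\ more general factors), which is presumably why it is phrased via uniqueness of the normalised trace. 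Your closing remarks on weak*-integrability and the uniform bound $\left|\frac{1}{d_\pi}\tr(T_\pi)\right|\le\norm{T}$ address a point the paper dismisses as ``immediate,'' and are a welcome addition rather than a defect.
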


\begin{proof}
It is easy to see that $\int_G\pi(s^{-1})A\pi(s)\,ds$ commutes with each element of $\pi(G)$,
hence by Schur's lemma is a scalar operator, and preserves $I_\pi$.  
Hence, $\int_G\pi(s^{-1})A\pi(s)\;ds=\omega(A)I_\pi$
for some functional $\omega$.  Likewise
\begin{align*}
\int_G\pi(s^{-1})A\pi(t)\pi(s)\,ds&=\int_G\pi(s^{-1})A\pi(ts)\,ds \\
&=\int_G\pi(s^{-1}t)A\pi(s)\,ds=\int_G\pi(s^{-1})\pi(t)A\pi(s)\,ds
\end{align*}
and as $\spn\pi(G)=\fB(\fH_\pi)$ we see that 
$\int_G\pi(s^{-1})AB\pi(s)\,ds=\int_G\pi(s^{-1})BA\pi(s)\,ds$.  By uniqueness of the
normalised trace, $\omega=\frac{1}{d_\pi}\tr$.  The second result is immediate. \end{proof}

The Fourier algebra is the predual of $\vn(G)$ via the dual pairing (\ref{eq:trigdual}).
Hence we obtain complete isometric identification
\begin{equation}\label{eq:falg}
\fal(G)=\ell^1\text{-}\bigoplus_{\pi\in\what{G}}d_\pi\smat^1_{d_\pi}
\end{equation}
where $\smat_d^1$ denotes the $d\times d$-matrices with trace norm; i.e.\ for
$u$ in $\fal(G)$ we have
\[
\norm{u}_{\fal}=\sum_{\pi\in\what{G}}d_\pi\norm{\hat{u}(\pi)}_{\smat^1}
\]
where $\norm{\hat{u}(\pi)}_{\smat^1}$ is the trace-norm of the $d_\pi\times d_\pi$-matrix
$\hat{u}(\pi)$.  

An {\it operator space structure} on a given complex linear space $\fX$ is an assignment
of norms, one on each space $\mat_n(\fX)$ for natural number $n$, which satisfies
certain compatibility conditions; see, for example M1 and M2 of \cite[p.\ 20]{effrosr}.
We shall not require these explicit axioms here.  Of importance to us, are the following facts. First, any
von Neuman algebra $\fV$, in particular $\vn(G)$, will have assigned to each $\mat_n(\fV)$ the
unique norm which realises it as a von Neumann algebra.  Maps
between operator spaces, $\Phi:\fX\to\fY$ are those maps whose matrix amplifications 
$\Phi^{(n)}:\mat_n(\fX)\to\mat_n(\fY)$, $\Phi^{(n)}[X_{ij}]=[\Phi(X_{ij})]$ are
uniformly bounded:  $\norm{\Phi}_{cb}=\sup_n\norm{\Phi^{(n)}}<\infty$.  The space
$\fC\fB(\fX,\fY)$ of completely bounded maps is itself an operator space via the 
isometric identifications $[\Phi_{ij}]\mapsto (X\mapsto[\Phi_{ij}(X)]):\mat_n(\fC\fB(\fX,\fY))\to
\fC\fB(\fX,\mat_n(\fY))$, where $\mat_m(\mat_n(\fX))=\mat_{mn}(\fX)$.  In particular
linear functionals are automatically completley bounded, and $\fX^*=\fC\fB(\fX,\Cee)$
inherits the operator space structure perscribed above.  If $\fV$ is a von Neumann algebra,
then its predual $\fV_*$ inherits the operator space structure from the inclusion $\fV_*\hookrightarrow
\fV^*$.  A map $\Phi:\fX\to\fY$ is called a complete isometry of each $\Phi^{(n)}$ is an isometry, and
a complete quotient if each $\Phi^{(n)}$ is a quotient map.  In the latter case we say $\fY$ is
a complete quotient space of $\fX$.  See \cite[I.3]{effrosr} for details.

Now if $\fA$ is any Banach space
of functions on $G$ for which $\trig(G)$ is dense in $\fA$, then $\fA^*$ may be realised
as a subspace of $\trig(G)'$.  Generally $\fA$ will be a subspace of $\fal(G)$ 
(these will arise form application of $\Gam$ and $\check{\Gam}$; see
\cite[Prop.\ 1.3]{forrestss}), whence
$\fV=\vn(G)$ will be frequently used as below.

Given an operator space $\fA$, a subspace $\fV$ of $\fA^*$ is {\it completely norming} 
if for each $n$ and $[x_{ij}]$ in $\matr_n(\fA)$ we have
\[
\norm{[x_{ij}]}_{\matr_n(\fA)}=
\sup\left\{\norm{[f_{kl}(x_{ij})]}_{\matr_{mn}}:[f_{kl}]\in\matr_m(\fV),
\norm{[f_{kl}]}_{\matr_m(\fA^*)}\leq 1,m\in\En\right\}.
\]
If $\fV$ is any weak*-dense subspace of $\fA^*$, then it is completely norming.
Indeed, this follows from the fact that the embedding of $\fA$ into $\fA^{**}$ is 
a complete isometry \cite[3.2.1]{effrosr}, and then that $\matr_n(\fV)$ is weak*-dense in
$\matr_n(\fA^*)$, where the latter is identified as $\fC\fB(\fA,\matr_n)
\cong(\fA\what{\otimes}\smat^1_n)^*$ \cite[(7.1.6)]{effrosr}.

\begin{lemma}\label{lem:opspace}
Let $\fX$ and $\fY$ be  operator spaces, $\Lam:\fX\to\fY$ be a bounded linear map
with dense range, and $\fV$ be a weak*-dense subspace of $\fY^*$.  Then
$\fA=\Lam(\fX)$, assigned the operator space structure in such a manner that it is regarded as a
complete quotient of $\fX$, admits $\fV$ as a weak*-dense subspace
of it dual $\fA^*$.  In particular, when $\fV$ is given the operator space structure by which 
$\Lam^*:\fV\to\fX^*$ is a complete isometry, then $\fV$ is completley norming for $\fA$.
\end{lemma}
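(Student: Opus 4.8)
The plan is to realise $\fA$ concretely as the operator space quotient $\fX/\ker\Lam$ and then transport everything to the dual side. Since $\Lam$ is bounded, $\ker\Lam$ is a closed subspace, so the quotient operator space $\fX/\ker\Lam$ is defined, and the induced map $\bar\Lam:\fX/\ker\Lam\to\fA$ is a linear bijection; declaring it a complete isometry is exactly what it means to give $\fA$ the structure of a complete quotient of $\fX$. By the standard operator-space duality between quotients and annihilators (see \cite{effrosr}), this yields a complete isometry $\fA^*\cong(\ker\Lam)^\perp\subseteq\fX^*$. The map I shall use to embed $\fV$ into $\fA^*$ is then $f\mapsto\Lam^*f=f\circ\Lam$, equivalently the restriction $f\mapsto f|_\fA$ of $f$ to the subset $\fA=\Lam(\fX)$ of $\fY$.

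First I would check this is a well-defined injection into $\fA^*$. For $f\in\fV\subseteq\fY^*$ and $x\in\ker\Lam$ one has $(\Lam^*f)(x)=f(\Lam x)=0$, so $\Lam^*f\in(\ker\Lam)^\perp=\fA^*$, and under the identification above $\Lam^*f$ is precisely $f|_\fA$. Injectivity is where the density of the range enters: if $f|_\fA=0$ then $f$ vanishes on $\fA=\Lam(\fX)$, which is dense in $\fY$, so $f=0$ by continuity. Thus $\fV$ sits inside $\fA^*$ as a subspace.

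The substantive step is weak*-density, and for this I would invoke the bipolar criterion: a subspace $W$ of a dual space $\fA^*$ is weak*-dense precisely when its pre-annihilator $\{a\in\fA:g(a)=0\text{ for all }g\in W\}$ is $\{0\}$. Applying this with $W$ the image of $\fV$, the pre-annihilator is $\{a\in\fA:f(a)=0\text{ for all }f\in\fV\}$. Now the hypothesis that $\fV$ is weak*-dense in $\fY^*$ says, by the very same criterion, that $\{y\in\fY:f(y)=0\text{ for all }f\in\fV\}=\{0\}$. Since $\fA\subseteq\fY$, any $a$ in the pre-annihilator computed in $\fA$ also lies in the one computed in $\fY$, hence is $0$. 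Therefore the pre-annihilator is trivial and $\fV$ is weak*-dense in $\fA^*$. The only thing to be careful about here is the bookkeeping of the two pairings, namely that the $\fA$-$\fA^*$ pairing of $a$ with $f|_\fA$ agrees with the $\fY$-$\fY^*$ pairing of $a$ with $f$; this is immediate from the definition of the restriction.

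Finally, for the ``in particular'' clause, I would observe that the inclusion $\fA^*\cong(\ker\Lam)^\perp\hookrightarrow\fX^*$ is a complete isometry, so the factorisation $\Lam^*|_\fV:\fV\to\fA^*\hookrightarrow\fX^*$ shows that endowing $\fV$ with the operator space structure making $\Lam^*$ a complete isometry into $\fX^*$ is the same as giving $\fV$ the subspace structure it inherits from $\fA^*$. With that structure $\fV$ is a weak*-dense subspace of $\fA^*$ by what precedes, and the remark recorded just before the lemma --- that any weak*-dense subspace of $\fA^*$ is completely norming --- finishes the argument. The main obstacle, such as it is, is not any single hard estimate but rather keeping the three operator space structures (on $\fA$ as a quotient, on $\fA^*$ as its dual inside $\fX^*$, and on $\fV$) consistently aligned; once the quotient--annihilator duality is in place the rest is formal.
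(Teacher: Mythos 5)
Your argument is correct and follows essentially the same route as the paper: view $\fV$ inside $\fA^*$ via $\Lam^*$ (injective by density of the range), get weak*-density from the bipolar theorem and the triviality of the pre-annihilator, and reduce the completely norming claim to the fact that complete quotient maps dualize to complete isometries (which you derive from the quotient--annihilator duality where the paper simply cites \cite[4.1.9]{effrosr}). No gaps.
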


\begin{proof}
The density of $\fA$ in $\fY$ allows us to view $\fV$ as a subspace of $\fA^*$.
Furthermore, if $a$ in $\fA$ satisfies $f(a)=0$ for each $f$ in $\fV$, then $f(a)=0$ for
each $f$ in $\fY^*$, hence $a=0$.  Thus $\{f\in\fV:f|_\fA=0\}=\{0\}$,
and bipolar theorem allows us to conclude that $\fV$ is weak*-dense in $\fY^*$.
Moreover, $\Lam^*$ is a complete isometry exactly when $\Lam$ is a complete quotient map,
thanks to \cite[4.1.9]{effrosr}.  Hence we see that $\fV$ is completely norming, thanks
to remarks in the last paragraph, above.
\end{proof}

\subsection{The Haagerup tensor product of Fourier algebras}
Fix a Hilbert space $\fH$.  In \cite{smith,blechers} it is shown that
each weak*-weak* continuous completley bounded operator $\Phi$ on $\fB(\fH)$ --- we shall 
write $\Phi\in\fC\fB^\sig(\fB(\fH))$ --- is given by
\begin{equation}\label{eq:ncb}
\Phi(T)= \sum_{i\in I}V_iTW_i
\end{equation}
where $\{V_i,W_i\}_{i\in I}$ is a family in $\fB(\fH)$ for which each of the series 
$\sum_{i\in I}V_iV_i^*$ and
$\sum_{i\in I}W_i^*W_i$ is  weak*-convergent.  We shall write $\Phi=
\sum_{i\in I}V_i\otimes W_i$, accordingly.  Furthermore,
we have completely bounded norm
\[
\norm{\Phi}_{cb}=\min\left\{\norm{\sum_{i\in I}V_iV_i^*}^{1/2}
\norm{\sum_{i\in I}W_i^*W_i}^{1/2}:\Phi=\sum_{i\in I}V_i\otimes W_i\text{ as in (\ref{eq:ncb})}\right\}
\]
and operator composition
\begin{equation}\label{eq:cbcomposition}
\sum_{i\in I}V_i\otimes W_i\circ\sum_{i'\in I}V'_{i'}\otimes W'_{i'}
=\sum_{i\in I}\sum_{i'\in I}V_iV'_{i'}\otimes W'_{i'}W_i.
\end{equation}
Hence it is sensible to write
\[
\fC\fB^\sig(\fB(\fH))=\fB(\fH)\otimes^{w^*h}\fB(\fH)
\]
and call this space the {\it weak* Haagerup tensor product}.  This is known to be the
same as the extended Haagerup tensor product of \cite{effrosr1}; see,
for example, the treatment of \cite[\S 2]{spronk}.

Let $\fV\subseteq\fB(\fH)$ be a von Neumann algebra and $\fV'$ its commutant.  It is
shown in \cite{smith,blechers} that subspace of $\fV'$-bimodule maps in 
$\fC\fB^\sig(\fB(\fH))$ are exactly those element which admit a representation as in 
(\ref{eq:ncb}) with each $V_i,W_i$ in $\fV$.  The description of the norm, with minimum
taken over elements $V_i$ and $W_i$ form $\fV$, and the operator composition are maintained,
making this a closed subalgebra of $\fB(\fH)\otimes^{w^*h}\fB(\fH)$.  We denote this space by 
$\fV\otimes^{w^*h}\fV$.  Let $\fV_*$ denote the predual of $\fV$.  We define for an 
elementary tensor $u=v\otimes w$ in $\fV_*\otimes\fV_*$ and $\Phi=\sum_{i\in I}V_i\otimes W_i$
in $\fV\otimes^{w^*h}\fV$, the dual pairing
\[
\langle u,\Phi\rangle
=\sum_{i\in I}\langle v,V_i\rangle\langle w,W_i\rangle.  
\]
and define the Haagerup norm on $\fV_*\otimes\fV_*$ by
\[
\norm{u}_h=\sup\{|\langle u,\Phi\rangle|:\Phi\in\fV\otimes^{w^*h}\fV,\norm{\Phi}_{cb}\leq 1\}.
\]
We then let $\fV_*\otimes^h\fV_*$
denote the completion of $\fV_*\otimes\fV_*$ with respect to this norm.
Then, as shown in \cite{blechers} the dual pairing above extends to a duality 
\begin{equation}\label{eq:htpdual}
(\fV_*\otimes^h\fV_*)^*\cong\fV\otimes^{w^*h}\fV.   
\end{equation}
Then $\fV_*\otimes^h\fV_*$
gains it operator space structure it gains as being a distinguished predual of
$\fV\otimes^{w^*h}\fV\subset\fC\fB^\sig(\fB(H))$, as in \cite[(3.2.2) \& (3.2.11)]{effrosr}.
[The more traditional method
of defining $\fV_*\otimes^h\fV_*$ is to assign $\fV_*$ the predual operator space structure
and use tensor formulas such as in \cite[II.9]{effrosr}.  This gives an equivalent but non-intuitive
description of $\fV_*\otimes^h\fV_*$.  We will make extensive use only of (\ref{eq:htpdual}), above.]

Let us return to the Fourier algebra $\fal(G)$ of a compact group.  
Recall, as in the last section, that $\fal(G)$ is the predual of
the von Neumann algebra $\vn(G)\subset\fB(\fH)$ where 
$\fH=\ell^2\text{-}\bigoplus_{\pi\in\what{G}}\fH_\pi$.
Then we define the Haagerup tensor product of $\fal(G)$ with itself in terms of the completely
isometric duality, as in (\ref{eq:htpdual}), above
\begin{equation}\label{eq:fahtpdual}
(\fal(G)\otimes^h\fal(G))^*\cong\vn(G)\otimes^{w^*h}\vn(G)\subset \fC\fB(\fB(\fH)).
\end{equation}

Let us note a result promised above.

\begin{proposition}
The Banach algebra $\fal(G)\otimes^h\fal(G)$ is semi-simple.
\end{proposition}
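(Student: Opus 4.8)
The plan is to show that the Jacobson radical of $\fal(G)\otimes^h\fal(G)$ is trivial by exhibiting enough multiplicative functionals (characters) to separate points. Since the excerpt recalls (via \cite[Thm.\ 2]{tomiyama}) that this algebra has spectrum $G\times G$, the natural strategy is to verify that the point evaluations at pairs $(s,t)\in G\times G$ genuinely separate the elements of $\fal(G)\otimes^h\fal(G)$, so that an element lying in every maximal ideal must vanish identically.

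First I would make the spectrum concrete. Each point $(s,t)\in G\times G$ should correspond to the character sending an elementary tensor $u\otimes v$ to $u(s)v(t)$; equivalently, under the duality (\ref{eq:fahtpdual}), this is the functional given by pairing against the element $\lam(s)\otimes\lam(t)$ of $\vn(G)\otimes^{w^*h}\vn(G)$, thinking of $\lam(s)\otimes\lam(t)$ as the weak*-continuous completely bounded map $T\mapsto\lam(s)T\lam(t)$ on $\fB(\fH)$. I would check that this indeed defines a nonzero homomorphism: multiplicativity reduces, via the operator composition formula (\ref{eq:cbcomposition}), to the fact that $s\mapsto\lam(s)$ is a representation, so that evaluation at $(s,t)$ respects the algebra product. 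This realises each $(s,t)$ as a character, consistent with the identification of the spectrum.

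Next, the semisimplicity amounts to: if $w\in\fal(G)\otimes^h\fal(G)$ satisfies $\chi_{(s,t)}(w)=0$ for all $(s,t)$, then $w=0$. Here the key input is that $\fal(G)\otimes^h\fal(G)$ embeds as an algebra of honest functions on $G\times G$; a natural route is to use that the elementary tensors $\lam(s)\otimes\lam(t)$ span a weak*-dense subspace of $\vn(G)\otimes^{w^*h}\vn(G)$, since $\lam(G)$ is weak*-dense in $\vn(G)$ and spans enough of $\fB(\fH_\pi)$ by the remarks preceding Proposition \ref{prop:expectation}. Given weak*-density of $\spn\{\lam(s)\otimes\lam(t)\}$ in the dual space, a separating family of functionals of the form $\chi_{(s,t)}$ forces any $w$ annihilated by all of them to pair to zero against a weak*-dense subset of the dual, hence $w=0$ by the duality (\ref{eq:fahtpdual}).

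The main obstacle I anticipate is the weak*-density claim for $\spn\{\lam(s)\otimes\lam(t)\}$ inside $\vn(G)\otimes^{w^*h}\vn(G)=\fC\fB^\sig(\fB(\fH))$ restricted to $\vn(G)'$-bimodule maps: one must verify that the only normal completely bounded bimodule map annihilating every $T\mapsto\lam(s)T\lam(t)$ is the zero map, which is where the structure of the weak* Haagerup tensor product via (\ref{eq:ncb}) does real work. I would handle this by testing against rank-one or trace-class operators: evaluating $\Phi=\sum_i V_i\otimes W_i$ on the maps $\lam(s)\,\cdot\,\lam(t)$ amounts to evaluating the corresponding function $\sum_i\langle v,\lam(s)\rangle\langle w,\lam(t)\rangle$ for preduals, and since $\lam$ separates points of $\fal(G)$ (the Fourier algebra consists of functions, as in (\ref{eq:falg})), vanishing for all $(s,t)$ propagates back to vanishing of the element. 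Thus semisimplicity follows once this separation is secured.
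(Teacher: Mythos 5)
Your overall strategy---realise each point $(s,t)$ of the spectrum as the functional of pairing against $\lam(s)\otimes\lam(t)$ under the duality (\ref{eq:fahtpdual}), then deduce semisimplicity from the claim that these character functionals separate points of $\fal(G)\otimes^h\fal(G)$---is exactly the paper's. But the step you yourself flag as the main obstacle, the weak*-density of $\spn\{\lam(s)\otimes\lam(t):s,t\in G\}$ in $\vn(G)\otimes^{w^*h}\vn(G)$, is where your argument has a genuine gap. Your proposed resolution (``testing against rank-one or trace-class operators'' and noting that the associated function $\sum_i v_i(s)w_i(t)$ vanishes for all $(s,t)$) only tells you that the element vanishes \emph{as a function on $G\times G$}; concluding from this that the element itself is zero is precisely the semisimplicity statement you are trying to prove, so the reasoning is circular. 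The argument is fine for a finite sum of elementary tensors in the algebraic tensor product $\fal(G)\otimes\fal(G)$, where linear-independence bookkeeping closes the loop, but a general element of the Haagerup completion is only an infinite sum $\sum_i v_i\otimes w_i$ subject to convergence conditions, and nothing you have said excludes a nonzero such element whose associated function is identically zero.

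The missing ingredient is an actual density theorem for the weak* Haagerup tensor product: the paper invokes Blecher--Smith \cite[Thms.\ 3.1 \& 3.2]{blechers} for the fact that the algebraic tensor product $\vn(G)\otimes\vn(G)$ is weak*-dense in $\vn(G)\otimes^{w^*h}\vn(G)\cong(\fal(G)\otimes^h\fal(G))^*$. Combined with the easy weak*-density of $\spn\lam(G)\otimes\lam(G)$ in $\vn(G)\otimes\vn(G)$, the bipolar theorem then gives $(\spn\lam(G)\otimes\lam(G))_\perp=\{0\}$, which is exactly the separation you need. (Your direct verification that each $\lam(s)\otimes\lam(t)$ is multiplicative via (\ref{eq:cbcomposition}) is a nice self-contained touch the paper leaves to the Tomiyama citation.) Replace the circular step with an appeal to, or independent proof of, this Blecher--Smith density and your proof is complete; without it, the key separation claim is unestablished.
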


\begin{proof}
We observe that $\vn(G)\otimes\vn(G)$ is weak*-dense in $\vn(G)\otimes^{w^*h}\vn(G)\cong
(\fal(G)\otimes^h\fal(G))^*$, by \cite[Thms.\ 3.1 \& 3.2]{blechers}.  It is obvious that
$\spn\lam(G)\otimes\lam(G)$ is weak*-dense in $\vn(G)\otimes\vn(G)$,
where $\lam:G\to\vn(G)$ identifies $G$ with the spectrum of $\falg$.
Thus the bipolar theorem tells us that $(\spn\lam(G)\otimes\lam(G))_\perp=\{0\}$,
whence $\lam(G)\otimes\lam(G)$ separates points in $\fal(G)\otimes^h\fal(G)$.
\end{proof}

\section{Main results}

\subsection{`Twisted' convolution}\label{sec:twistedconvolution}
Our main method for dealing with understanding $\Gam$, as defined on either
of $\fal(G)\hat{\otimes}\fal(G)$, or on $\fal(G)\otimes^h\fal(G)$, is to study its
adjoint.  To this end consider $\Gam:\trig(G)\otimes\trig(G)\to\trig(G)$.  If
$u,v\in\trig(G)$ and $t\in G$ we have
\[
\langle \Gam(u\otimes v),\lam(t)\rangle=
\int_G u(s)v(t^{-1}s)\; ds=\int_G\langle u\otimes v,\lam(s)\otimes\lam(t^{-1})\lam(s)\rangle\,ds
\]
and hence we have that $\Gam^*(\lam(t))=\int_G\lam(s)\otimes\lam(t^{-1})\lam(s)\, ds$, in a 
weak* sense.  By weak*-density of $\spn\lam(G)$ in $\trig(G)'$ we conclude that
for $T$ in $\trig(G)'$ we have
\begin{equation}\label{eq:gamadjoint}
\Gam^*(T)=\int_G\lam(s)\otimes\check{T}\lam(s)\, ds
\end{equation}
where the integral is understood in the weak* sense and $\check{T}$ is defined as in
Lemma \ref{lem:checkdual}.

We define $\fal_\Del(G)=\Gam(\fal(G)\hat{\otimes}\fal(G))$.  In \cite{forrestss} this was
regarded as a quotient space of $\fal(G\times G)\cong\fal(G)\hat{\otimes}\fal(G)$ and
assigned a norm accordingly.
We augment the concrete realisation of this norm, computed in
\cite[Thm.\ 2.2]{forrestss}, by specifying the operator space structure on 
$\fdelg$ in a concrete manner.

\begin{theorem}\label{theo:opsponfdel}
The operator space structure on $\fdelg$, qua complete quotient of $\fal(G)\hat{\otimes}\fal(G)$
by $\Gam$, is given by the weighted operator space direct sum
\[
\fdelg=\ell^1\text{-}\bigoplus_{\pi\in\what{G}}d_\pi^{3/2}\smat^2_{d_\pi,r}
\]
where $\smat^2_{d,r}$ denotes the $d\times d$ matrices with
Hilbert-Schmidt norm and row operator space structure.
\end{theorem}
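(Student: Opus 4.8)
The plan is to pass to adjoints and identify the dual operator space $\fdelg^*$ concretely. Since $\fdelg$ is, by definition, the complete quotient of $\fal(G)\hat{\otimes}\fal(G)$ by $\Gam$, the map $\Gam^*$ is a complete isometry of $\fdelg^*$ onto a weak*-closed subspace of $(\fal(G)\hat{\otimes}\fal(G))^*\cong\vn(G\times G)=\ell^\infty\text{-}\bigoplus_{\pi,\rho\in\what{G}}\fB(\fH_\pi\otimes\fH_\rho)$, by \cite[4.1.9]{effrosr}, exactly as in Lemma \ref{lem:opspace}. I would first pin down this range as a weighted direct sum of Hilbertian operator spaces, read off its structure, and then recover $\fdelg$ by taking the operator space predual.

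To compute the range I would feed the formula (\ref{eq:gamadjoint}) into the decomposition of $\vn(G\times G)$, pulling $(\check{T})_\rho$ out of the integral:
\[
(\Gam^*T)_{\pi,\rho}=(\id_{\fH_\pi}\otimes(\check{T})_\rho)\int_G\pi(s)\otimes\rho(s)\,ds.
\]
The Schur orthogonality relations show that $\int_G\pi(s)\otimes\rho(s)\,ds$ vanishes unless $\rho\cong\bar{\pi}$, in which case it is the rank-one projection $|\xi_\pi\rangle\langle\xi_\pi|$ onto the one-dimensional space of $G$-invariant vectors, where $\xi_\pi=d_\pi^{-1/2}\sum_i e_i\otimes\bar{e}_i$. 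Combining this with $(\check{T})_{\bar{\pi}}=(T_\pi)^\trans$ from Lemma \ref{lem:checkdual}, I expect
\[
(\Gam^*T)_{\pi,\bar{\pi}}=|\eta_\pi\rangle\langle\xi_\pi|,\qquad \eta_\pi=(\id\otimes(T_\pi)^\trans)\xi_\pi,
\]
with all other components zero; under the standard identification $\fH_\pi\otimes\bar{\fH}_\pi\cong\smat^2_{d_\pi}$ the vector $\eta_\pi$ corresponds to $d_\pi^{-1/2}T_\pi$.

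From this the Banach space structure follows quickly. Since $\Gam^*$ is isometric and $\norm{\xi_\pi}=1$, one gets $\norm{\Gam^*T}=\sup_\pi d_\pi^{-1/2}\norm{T_\pi}_{\smat^2}$. Dualising through the pairing (\ref{eq:trigdual}), whose $\pi$-th term carries the weight $d_\pi$, and using the (sharp) Cauchy--Schwarz inequality for the Hilbert--Schmidt norm, I would obtain $\norm{u}_{\fdelg}=\sum_{\pi}d_\pi^{3/2}\norm{\hat{u}(\pi)}_{\smat^2}$; the independence of the constraints across $\pi$ turns the supremum on the dual side into an $\ell^1$ sum here, and this produces both the Hilbert--Schmidt norm and the weight $d_\pi^{3/2}$.

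It remains to fix the operator space structure, which is the crux. The decisive observation is that in each slot the range consists of rank-one operators $|\eta\rangle\langle\xi_\pi|$ with the bra $\langle\xi_\pi|$ held fixed; factoring $|\eta\rangle\langle\xi_\pi|$ through $\Cee$ exhibits $\eta\mapsto|\eta\rangle\langle\xi_\pi|$ as a complete isometry of a column Hilbert space onto this slot, so that $\fdelg^*\cong\ell^\infty\text{-}\bigoplus_\pi d_\pi^{-1/2}\smat^2_{d_\pi,c}$ completely isometrically, where $\smat^2_{d,c}$ is the column analogue of $\smat^2_{d,r}$. Taking the operator space predual sends the $\ell^\infty$ sum to an $\ell^1$ sum and each column space to the corresponding row space, and the scalar weight is pinned down by the norm computation above; this yields $\fdelg=\ell^1\text{-}\bigoplus_\pi d_\pi^{3/2}\smat^2_{d_\pi,r}$. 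The main obstacles I anticipate are essentially bookkeeping: tracking the conjugation-and-transpose of Lemma \ref{lem:checkdual} together with the weight $d_\pi$ in (\ref{eq:trigdual}) so that the final weight emerges as $d_\pi^{3/2}$, and justifying that the operator space predual of the weak*-closed range really is $\fdelg$ with its quotient structure --- which holds because $\Gam^*$ is simultaneously a complete isometry and a weak*-homeomorphism onto its weak*-closed range.
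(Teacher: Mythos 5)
Your proposal is correct and follows the same overall strategy as the paper's proof: pass to the adjoint $\Gam^*$, use the Schur orthogonality relations to see that $\Gam^*(T)$ is supported only on the anti-diagonal blocks $\fB(\fH_\pi\otimes\fH_{\bar{\pi}})$ of $\vn(G\times G)$, identify the range completely isometrically with $\ell^\infty\text{-}\bigoplus_{\pi}d_\pi^{-1/2}\smat^2_{d_\pi,c}$, and dualise (column to row, $\ell^\infty$ to $\ell^1$, the weight $d_\pi$ in (\ref{eq:trigdual}) combining with $d_\pi^{1/2}$ to give $d_\pi^{3/2}$). The one genuine difference is how the column operator space structure of the range is certified. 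The paper computes $\Gam^{*(n)}[T_{ij}]^*\Gam^{*(n)}[T_{ij}]$ directly, reduces it via the tracial expectation of Proposition \ref{prop:expectation} to the Gram matrix $\sum_{k}[\tr(T_{ki,\bar{\pi}}^*T_{kj,\bar{\pi}})]$, and matches the resulting norm formula with the characterisation of column Hilbert space in \cite[(3.4.4)]{effrosr}. You instead observe that each nonzero block of $\Gam^*(T)$ is a rank-one operator with fixed unit bra, so the slot factors through $\Cee$ and is a copy of $\fB(\Cee,\fH_\pi\otimes\fH_{\bar{\pi}})$, i.e.\ a column Hilbert space; this yields all matrix levels at once without the explicit amplified computation, at the small cost of checking that right-composition with the fixed rank-one co-isometry is completely isometric (it is, since $\norm{\xi_\pi}=1$). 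Your bookkeeping checks out: $\check{T}_{\bar{\pi}}=T_\pi^{\trans}$ is the correct reading of Lemma \ref{lem:checkdual}, $\eta_\pi$ does correspond to $d_\pi^{-1/2}T_\pi$ under $\fH_\pi\otimes\fH_{\bar{\pi}}\cong\smat^2_{d_\pi}$, and the predual norm $\sum_\pi d_\pi^{3/2}\norm{\hat{u}(\pi)}_{\smat^2}$ follows as you say. Both routes must finish via Lemma \ref{lem:opspace}, using that $\vn(G)$ is weak*-dense, hence completely norming, in $\fdelg^*$ --- the point you rightly flag at the end, since $\Gam^*(\vn(G))$ itself need not be weak*-closed when the dimensions $d_\pi$ are unbounded.
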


\begin{proof}
We recall that $(\fal(G)\hat{\otimes}\fal(G))^*\cong\vn(G)\bar{\otimes}\vn(G)$
by \cite{effrosr0} (see \cite[7.2.4]{effrosr}).
We consider the adjoint $\Gam^*:\vn(G)\to\vn(G)\bar{\otimes}\vn(G)$ with
$n$th amplification $\Gam^{*(n)}:\mat_n(\vn(G))\to\vn(G)\bar{\otimes}\mat_n(\vn(G))$.
We have from (\ref{eq:gamadjoint}) that 
\[
\Gam^{*(n)}[T_{ij}]=\int_G \lam(t)\otimes[\check{T}_{ij}] \lam(t)^{(n)}\,ds 
\]
and hence
\begin{align*}
\Gam^{*(n)}[T_{ij}]^*&\Gam^{*(n)}[T_{ij}]
=\int_G\int_G \lam(s^{-1}t)\otimes \lam(s^{-1})^{(n)}[\check{T}_{ij}]^*[\check{T}_{ij}]
\lam(t)^{(n)}\,ds\,dt \\
&=\int_G\int_G \lam(s^{-1})\otimes \lam(s^{-1})^{(n)}\lam(t^{-1})^{(n)}
\sum_{k=1}^n[\check{T}_{ki}^*\check{T}_{kj}]\lam(t)^{(n)}\,ds\,dt \\
&=\int_G\lam(s)\otimes \lam(s)^{(n)}\,ds\cdot
\sum_{k=1}^nI\otimes\int_G [\lam(t^{-1})\check{T}_{ki}^*\check{T}_{kj}\lam(t)]\,dt.
\end{align*}
We observe that
\[
P_n=\int_G\lam(s)\otimes \lam(s)^{(n)}\,ds
\]
is evidently a self-adjoint projection.  Furthermore, the Schur orthogonality relations
tell us that on 
$\fH\otimes^2\fH^n=\ell^2\text{-}\bigoplus_{\pi',\pi\in\what{G}}\fH_{\pi'}\otimes^2\fH_\pi^n$
we have
\[
P_n=\bigoplus_{\pi\in\what{G}}\int_G \bar{\pi}(s)\otimes\pi(s)^{(n)}\,ds
\]
and hence this projection is non-zero only in each anti-diagonal component of \linebreak
$\vn(G)\bar{\otimes}\mat_n(\vn(G))=\ell^\infty\text{-}
\bigoplus_{\pi',\pi\in\what{G}\times\what{G}}\fB(\fH_{\pi'})
\bar{\otimes}\mat_n(\fB(\fH_\pi))$.  However, by Proposition \ref{prop:expectation} we see that
\[
\sum_{k=1}^nI\otimes\int_G [\lam(t^{-1})\check{T}_{ki}^*\check{T}_{kj}\lam(t)]\,dt
=\sum_{k=1}^n\bigoplus_{\pi\in\what{G}}\frac{1}{d_\pi}I\otimes
[\tr(\check{T}_{ki,\pi}^*\check{T}_{kj,\pi})I_\pi]
\]
Thus we see that
\begin{align*}
\norm{\Gam^{*(n)}[T_{ij}]}
&=\norm{\Gam^{*(n)}[T_{ij}]^*\Gam^{*(n)}[T_{ij}]}^{1/2} \\
&=\sup_{\pi\in\what{G}}\frac{1}{d_\pi^{1/2}}
\norm{\sum_{k=1}^n[\tr(\check{T}_{ki,\pi}^*\check{T}_{kj,\pi})]} ^{1/2} \\
&=\sup_{\pi\in\what{G}}\frac{1}{d_\pi^{1/2}}
\norm{\sum_{k=1}^n[\tr(T_{ki,\bar{\pi}}^*T_{kj,\bar{\pi}})]} ^{1/2}
\end{align*}
where we have appealed to Lemma \ref{lem:checkdual} in the last line.
According to \cite[(3.4.4)]{effrosr} we obtain completely isometric embedding
\[
\Gam^*(\vn(G))\subseteq\ell^\infty\text{-}\bigoplus_{\pi\in\what{G}}
\frac{1}{d_\pi^{1/2}}\smat^2_{d_\pi,c}
\]
Thus Lemma \ref{lem:opspace} and the duality of operator Hilbert spaces 
\cite[(3.4.4)]{effrosr} provide the desired result.  We observe, moreover, that 
our dual pairing (\ref{eq:trigdual}) is bilinear, which is why we need not concern ourselves 
with conjugate spaces.
\end{proof}

The operator space structure of Theorem \ref{theo:opsponfdel} provides alternate explanation
for curious results discovered in the some prior articles.

\begin{corollary}
{\bf (i)} {\rm \cite[Prop.\ 2.5]{forrestss}} We have $\fdelg\hat{\otimes}\fdelg\cong\fal_\Del(G\times G)$.

{\bf (ii)} {\rm \cite[Prop.\ 4.3]{forrestss2}}  The map $u\mapsto\check{u}$ is a complete isometry
on $\fdelg$.
\end{corollary}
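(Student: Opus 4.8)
The plan is to read both assertions directly off the concrete operator space structure furnished by Theorem~\ref{theo:opsponfdel}, writing $\fdelg=\ell^1\text{-}\bigoplus_{\pi\in\what{G}}d_\pi^{3/2}\smat^2_{d_\pi,r}$, where each summand is the row Hilbert space of Hilbertian dimension $d_\pi^2$. Once this picture is in hand, both parts reduce to standard operator space identities together with careful bookkeeping of the weights.

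For (i), I would first apply Theorem~\ref{theo:opsponfdel} to the group $G\times G$. Since $\what{G\times G}=\what{G}\times\what{G}$ with $d_{\pi\boxtimes\rho}=d_\pi d_\rho$, this yields $\fal_\Del(G\times G)=\ell^1\text{-}\bigoplus_{\pi,\rho}(d_\pi d_\rho)^{3/2}\smat^2_{d_\pi d_\rho,r}$. On the other side, the operator projective tensor product commutes with $\ell^1$-direct sums, so $\fdelg\hat{\otimes}\fdelg=\ell^1\text{-}\bigoplus_{\pi,\rho}d_\pi^{3/2}d_\rho^{3/2}\bigl(\smat^2_{d_\pi,r}\hat{\otimes}\smat^2_{d_\rho,r}\bigr)$. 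It then remains to invoke the standard fact that row Hilbert spaces are multiplicative for the operator projective tensor product, $\smat^2_{m,r}\hat{\otimes}\smat^2_{n,r}\cong\smat^2_{mn,r}$ (the finite-dimensional form of $R\hat{\otimes}R\cong R$; see \cite{effrosr}). Together with the weight bookkeeping $d_\pi^{3/2}d_\rho^{3/2}=(d_\pi d_\rho)^{3/2}$ this matches the two weighted direct sums summand by summand, giving the asserted complete isometry.

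For (ii), I would start from the identity $\what{\check{u}}(\pi)=\hat{u}(\bar{\pi})^\trans$ established in the proof of Lemma~\ref{lem:checkdual}. Under the identification of Theorem~\ref{theo:opsponfdel} this exhibits $u\mapsto\check{u}$ as the composite of two maps: the relabelling of summands $\pi\leftrightarrow\bar{\pi}$, followed by transposition inside each block $\smat^2_{d_\pi,r}$. The relabelling is a complete isometry of the $\ell^1$-direct sum, since $\pi\mapsto\bar{\pi}$ is an involution of $\what{G}$ preserving both the block size $d_{\bar{\pi}}=d_\pi$ and the weight $d_{\bar{\pi}}^{3/2}=d_\pi^{3/2}$. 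Transposition is a unitary of the $d_\pi^2$-dimensional Hilbert space $\smat^2_{d_\pi}$ for the Hilbert--Schmidt inner product; since the row Hilbert space is homogeneous, every such unitary acts completely isometrically on $\smat^2_{d_\pi,r}$. Hence $u\mapsto\check{u}$ is a composite of complete isometries, as required.

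The routine ingredients are the distributivity of $\hat{\otimes}$ over $\ell^1$-sums and the homogeneity of row Hilbert spaces. I expect the one delicate point to be the operator space identity $\smat^2_{m,r}\hat{\otimes}\smat^2_{n,r}\cong\smat^2_{mn,r}$ for the \emph{row} structure, in tandem with tracking the $d_\pi^{3/2}$ weights and the reindexing $\what{G\times G}=\what{G}\times\what{G}$: substituting the column structure, or mishandling a weight, would destroy the complete isometry even though the Banach-space identities would survive. As a sanity check one may note that the same row identity, combined with the dual fact $C\hat{\otimes}C\cong C$, recovers the classical $\smat^1_m\hat{\otimes}\smat^1_n\cong\smat^1_{mn}$ underlying $\fal(G)\hat{\otimes}\fal(G)\cong\fal(G\times G)$.
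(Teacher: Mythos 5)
Your proposal is correct and follows essentially the same route as the paper: part (i) is the identity $\smat^2_{m,r}\hat{\otimes}\smat^2_{n,r}\cong\smat^2_{mn,r}$ applied blockwise across the $\ell^1$-direct sum with the weights multiplying as $d_\pi^{3/2}d_\rho^{3/2}=(d_\pi d_\rho)^{3/2}$, and part (ii) is the observation that $u\mapsto\check{u}$ acts blockwise as a relabelling $\pi\leftrightarrow\bar{\pi}$ composed with a transpose, which is completely isometric by the homogeneity of row Hilbert spaces (the paper's citation of \cite[3.4.2]{effrosr}).
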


\begin{proof}
Consider the identifications
\begin{align*}
\left(\ell^1\text{-}\bigoplus_{\pi\in\what{G}}d_\pi\smat_{d_\pi,r}^2\right)\hat{\otimes}
\left(\ell^1\text{-}\bigoplus_{\pi'\in\what{G}}d_{\pi'}\smat_{d_{\pi'},r}^2\right) 
&\cong \underset{\pi,\pi'\in\what{G}\times\what{G}}{\ell^1\text{-}\bigoplus}
d_\pi d_{\pi'}\smat_{d_\pi,r}^2
\hat{\otimes}\smat_{d_{\pi'},r}^2 \\
&\cong  \underset{\pi,\pi'\in\what{G}\times\what{G}}{\ell^1\text{-}\bigoplus}
d_\pi d_{\pi'}\smat_{d_\pi d_{\pi'},r}^2
\end{align*}
where we have appealed to the fact that the tensor product or row spaces is again a row space
(\cite[9.3.5]{effrosr}).  Since $\what{G\times G}\cong\what{G}\times
\what{G}$, (i) follows.  

The map $u\mapsto \check{u}:\trig_\pi\to\trig_{\bar{\pi}}$,
which is an isometry from $d_\pi\smat_{d_\pi}^2$ to 
$d_{\bar{\pi}}\smat_{d_{\bar{\pi}}}^2$ -- practically the transpose
-- is a complete isometry with row structure, thanks to \cite[3.4.2]{effrosr}.  
Thus by the structure of the direct product,
we get (ii).
\end{proof}

The next result is a bit of a surprise.  It shows that $\fal(G)\otimes^h\fal(G)$ behaves exactly
as does $\fal(G)\hat{\otimes}\fal(G)$ with respect to $\Gam$.

\begin{theorem}\label{theo:gamonhaag}
We have that $\Gam(\fal(G)\otimes^h\fal(G))=\fdelg$.  Moreover,
if $\fdelg$ is given the operator space structure in Theorem \ref{theo:opsponfdel}, above,
then $\Gam:\fal(G)\otimes^h\fal(G)\to\fdelg$ is a complete quotient map.
\end{theorem}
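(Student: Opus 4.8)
The plan is to mirror the strategy already used for Theorem \ref{theo:opsponfdel}, but now computing the adjoint of $\Gam$ on the weak* Haagerup tensor product rather than on the normal (spatial) tensor product. By the duality (\ref{eq:fahtpdual}), the adjoint of $\Gam:\fal(G)\otimes^h\fal(G)\to\fdelg$ is a map $\Gam^*:\vn(G)\to\vn(G)\otimes^{w^*h}\vn(G)$, and formula (\ref{eq:gamadjoint}) for $\Gam^*$ continues to make sense here: since $\Gam^*(T)=\int_G\lam(s)\otimes\check{T}\lam(s)\,ds$ is manifestly of the elementary form $\sum_i V_i\otimes W_i$ used in (\ref{eq:ncb}), it lands in the weak* Haagerup tensor product. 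The first step, then, is to check that $\Gam$ does indeed map into $\fdelg$ and, more importantly, to set up the computation of the norm of the $n$th amplification $\Gam^{*(n)}$ on $\mat_n(\vn(G))$, so that Lemma \ref{lem:opspace} can be invoked to identify the quotient operator space structure on the image.

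The key point I would emphasize is that the \emph{same} linear-algebraic computation as in Theorem \ref{theo:opsponfdel} should go through, because the relevant norm is ultimately governed by the Schur orthogonality projection $P_n=\int_G\lam(s)\otimes\lam(s)^{(n)}\,ds$ together with the tracial expectation of Proposition \ref{prop:expectation}. Concretely, I would compute $\Gam^{*(n)}[T_{ij}]^*\Gam^{*(n)}[T_{ij}]$ inside $\vn(G)\otimes^{w^*h}\mat_n(\vn(G))$, using the composition rule (\ref{eq:cbcomposition}) in place of the ordinary operator product. The crux is that the cb-norm in the $w^*h$-tensor product of a product $\Gam^{*(n)}[T_{ij}]$ is computed via the same $\norm{\sum V_iV_i^*}^{1/2}\norm{\sum W_i^*W_i}^{1/2}$ minimization, and the anti-diagonal localization forced by $P_n$ plus the expectation collapses the $W$-side to the scalars $\frac{1}{d_\pi}\tr(\check T_{ki,\pi}^*\check T_{kj,\pi})I_\pi$, exactly as before. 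If this succeeds, one gets the identical formula
\[
\norm{\Gam^{*(n)}[T_{ij}]}=\sup_{\pi\in\what{G}}\frac{1}{d_\pi^{1/2}}
\norm{\sum_{k=1}^n[\tr(T_{ki,\bar\pi}^*T_{kj,\bar\pi})]}^{1/2},
\]
whence $\Gam^*$ is the same completely isometric embedding into $\ell^\infty\text{-}\bigoplus_\pi\frac{1}{d_\pi^{1/2}}\smat^2_{d_\pi,c}$ as in Theorem \ref{theo:opsponfdel}. By Lemma \ref{lem:opspace} this forces $\Gam:\fal(G)\otimes^h\fal(G)\to\fdelg$ to be a complete quotient map onto the very space of Theorem \ref{theo:opsponfdel}, giving both the range equality and the complete-quotient assertion at once.

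The main obstacle I anticipate is justifying that the cb-norm computation in the weak* Haagerup product genuinely reduces to the same spatial expression used for $\vn(G)\bar\otimes\vn(G)$ in Theorem \ref{theo:opsponfdel}. In the normal tensor product the norm of a positive element is computed spatially on $\fH\otimes^2\fH^n$, and one reads off operator norms of the resulting scalar matrices directly; in the $w^*h$-setting the norm is the Haagerup (factorization) norm, and one must argue that for the \emph{particular} elementary form produced by $\Gam^*$ — where the projection $P_n$ and the tracial expectation have already diagonalized things — the Haagerup cb-norm coincides with the spatial one. Heuristically this should hold because $P_n$ is a projection commuting appropriately and the surviving data are scalar, so there is essentially no room for the two tensor norms to differ on this image; but making that rigorous, likely by exhibiting an explicit optimal factorization $\sum_i V_i\otimes W_i$ realizing the minimum in the cb-norm formula and matching the lower bound from the spatial estimate, is where the real work lies. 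I would handle it by producing the factorization explicitly from the computed product and then citing the norm formula for $\otimes^{w^*h}$ to confirm optimality.
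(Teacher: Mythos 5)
Your overall strategy is the right one, and you have correctly located the crux; but the primary computation you propose does not go through, and the step you defer as ``where the real work lies'' is in fact the entire proof. The specific failure: you cannot compute $\norm{\Gam^{*(n)}[T_{ij}]}$ by forming $\Gam^{*(n)}[T_{ij}]^*\circ\Gam^{*(n)}[T_{ij}]$ with the composition rule (\ref{eq:cbcomposition}) and taking a square root. That step in Theorem \ref{theo:opsponfdel} relies on the C$^*$-identity in the von Neumann algebra $\vn(G)\bar{\otimes}\matr_n(\vn(G))$; the algebra $\fC\fB^\sig(\fB(\fH))$ under composition is merely an operator algebra, not a C$^*$-algebra, so $\norm{\Phi^*\circ\Phi}_{cb}=\norm{\Phi}_{cb}^2$ is not available and the ``identical linear-algebraic computation'' cannot simply be transplanted.

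Your fallback plan is essentially correct, but note that you do not need an \emph{optimal} factorization, which removes the difficulty you anticipate. Two one-sided estimates suffice. For the upper bound, the representation (\ref{eq:gamadjoint}) already exhibits $\Gam^{*(n)}[T_{ij}]$ as $A\mapsto\int_G[\lam(s)A\check{T}_{ij}\lam(s)]\,ds$, and a direct Cauchy--Schwarz estimate on matrix entries $\left|\int_G\inprod{[\lam(s)A_{kl}\check{T}_{ij}\lam(s)]\xi}{\eta}\,ds\right|$ (equivalently, the inequality $\norm{\sum_iV_i\otimes W_i}_{cb}\leq\norm{\sum_iV_iV_i^*}^{1/2}\norm{\sum_iW_i^*W_i}^{1/2}$ applied to this factorization) bounds the cb norm by $\norm{\int_G\sum_{k=1}^n[\lam(s^{-1})\check{T}_{ki}^*\check{T}_{kj}\lam(s)]\,ds}^{1/2}$, which the computation in Theorem \ref{theo:opsponfdel} identifies as $\norm{[T_{ij}]}_{\matr_n(\fal_\Del(G)^*)}$; here Proposition \ref{prop:expectation} does the collapsing, and the projection $P_n$ plays no role in this direction. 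For the matching lower bound there is nothing to verify: the completely contractive inclusion $\fal(G)\hat{\otimes}\fal(G)\hookrightarrow\fal(G)\otimes^h\fal(G)$, together with the fact that $\fdelg$ is a complete quotient of $\fal(G)\hat{\otimes}\fal(G)$ (Theorem \ref{theo:opsponfdel}), gives a completely contractive injection of duals in the reverse direction, so the $w^*h$-norm dominates the spatial one automatically. Combining the two and invoking Lemma \ref{lem:opspace} finishes the argument. So the missing content in your write-up is precisely the Cauchy--Schwarz upper bound; once you carry that out you will have reproduced the paper's proof.
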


\begin{proof}
Let $\fal^h_\Del(G)=\Gam(\fal(G)\otimes^h\fal(G))$, and assign it the operator space
structure which makes $\Gam$ a complete quotient map.  The completley contractive
inclusion $\fal(G)\hat{\otimes}\fal(G)\hookrightarrow\fal(G)\otimes^h\fal(G)$ gives, via
the fact that $\fdelg$ is a complete quotient of $\fal(G)\hat{\otimes}\fal(G)$, a
completely contractive inclusion $\fdelg\hookrightarrow\fal^h_\Del(G)$.
Since $\trig(G)$ is dense in both subspaces, $\fdelg$ is dense in $\fal^h_\Del(G)$,
so there the adjoint $\fal^h_\Del(G)^*\hookrightarrow\fdelg^*$ is completely contractive and injective.  
We wish to see that this map is a complete isometry.  It suffices to appeal
to Lemma \ref{lem:opspace} and verify that on $\matr_n(\vn(G))$, we have
\begin{equation}\label{eq:dualineq}
\norm{[T_{ij}]}_{\matr_n(\fal^h_\Del(G)^*)}\leq\norm{[T_{ij}]}_{\matr_n(\fal_\Del(G)^*)}.
\end{equation}

We recall the duality relation (\ref{eq:fahtpdual}).
Hence for $[T_{ij}]$ in $\matr_n(\vn(G))$ we have by (\ref{eq:gamadjoint}) and the last
identification immediatley above
\begin{align*}
&\norm{[T_{ij}]}_{\matr_n(\fal^h_\Del(G)^*)}
=\norm{\Gam^{*(n)}[T_{ij}]}_{\fC\fB(\fB(\fH),\matr_n(\fB(\fH)))} \\
&\phantom{mm}
=\norm{A\mapsto \int_G [\lam(s)A\check{T}_{ij}\lam(s)]\,ds}_{\fC\fB(\fB(\fH),\matr_n(\fB(\fH)))} \\
&\phantom{mm}=\sup\left\{\left|\int_G \inprod{[\lam(s)A_{kl}\check{T}_{ij}\lam(s)]\xi}{\eta}\,ds\right|:
\begin{matrix} [A_{kl}]\in\mathrm{ball}(\matr_m(\fB(\fH))) \\
\xi,\eta\in\mathrm{ball}(\fH^{mn}),\, m\in\En \end{matrix}\right\}.
\end{align*}
We observe for operator matrix $[A_{kl}]$ and vectors $\xi,\eta$ as above that
\begin{align*}
&\left|\int_G \inprod{[\lam(s)A_{kl}\check{T}_{ij}\lam(s)]\xi}{\eta}\,ds\right| \\
&\phantom{mmm}\leq \int_G\norm{[A_{kl}\check{T}_{ij}\lam(s)]\xi}\norm{\lam(s^{-1})^{(nm)}\eta}\,ds \\
&\phantom{mmm}\leq \left( \int_G\norm{[A_{kl}\check{T}_{ij}\lam(s)]\xi}^2\right)^{1/2}
\left( \int_G\norm{\lam(s^{-1})^{(nm)}\eta}^2\,ds\right)^{1/2} \\
&\phantom{mmm}\leq  \left( \int_G
\inprod{[A_{kl}\check{T}_{ij}\lam(s)]^*[A_{kl}\check{T}_{ij}\lam(s)]\xi}{\xi}\right)^{1/2} \\
&\phantom{mmm}\leq 
\inprod{\int_G \sum_{k=1}^n[\lam(s^{-1})\check{T}_{ki}^*\check{T}_{kj}\lam(s)]\xi}{\xi}^{1/2} \\
&\phantom{mmm}\leq 
\norm {\int_G \sum_{k=1}^n[\lam(s^{-1})\check{T}_{ki}^*\check{T}_{kj}\lam(s)]}^{1/2}.
\end{align*}
However, calculations in the proof of Theorem \ref{theo:opsponfdel} reveal that the last
quantity is exactly
\[
\sup_{\pi\in\what{G}}\frac{1}{d_\pi^{1/2}}
\norm{\sum_{k=1}^n[\tr(T_{ki,\bar{\pi}}^*T_{kj,\bar{\pi}})]} ^{1/2}
=\norm{[T_{ij}]}_{\matr_n(\fal_\Del(G)^*)}.
\]
Thus (\ref{eq:dualineq}) is established.
\end{proof}

\subsection{Convolution}\label{sec:convolution}
We now wish to consider the map $\check{\Gam}$ on $\fal(G)\otimes^h\fal(G)$.
Consider $\check{\Gam}:\trig(G)\otimes\trig(G)\to\trig(G)$.  If
$u,v\in\trig(G)$ and $t\in G$ we have
\[
\langle \check{\Gam}(u\otimes v),\lam(t)\rangle=
\int_G u(s)v(s^{-1}t)\; ds=\int_G\langle u\otimes v,\lam(s)\otimes\lam(s^{-1})\lam(t)\rangle\,ds
\]
and hence we have that $\Gam^*(\lam(t))=\int_G\lam(s)\otimes\lam(s^{-1})\lam(t)\, ds$, in a 
weak* sense.  By weak*-density of $\spn\lam(G)$ in $\trig(G)'$ we conclude that
for $T$ in $\trig(G)'$ we have
\begin{equation}\label{eq:cgamadjoint}
\check{\Gam}^*(T)=\int_G\lam(s)\otimes\lam(s^{-1})T\, ds
\end{equation}
where the integral is understood in the weak* sense.

\begin{theorem}\label{theo:cgamonhaag}
We have $\check{\Gam}(\fal(G)\otimes^h\fal(G))=\fal(G)$.  Morover, 
$\check{\Gam}:\fal(G)\otimes^h\fal(G)\to\fal(G)$ is a complete quotient map.
\end{theorem}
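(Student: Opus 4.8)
The plan is to mimic the strategy of Theorem \ref{theo:gamonhaag} by passing to the adjoint map and exploiting the duality (\ref{eq:fahtpdual}). Since $\check\Gam:\trig(G)\otimes\trig(G)\to\trig(G)$ has dense range (it maps onto $\trig(G)$, which is dense in $\fal(G)$), I can endow $\check\Gam(\fal(G)\otimes^h\fal(G))$ with the complete quotient operator space structure and invoke Lemma \ref{lem:opspace}. The claim that the quotient is all of $\fal(G)$, completely isometrically, will follow once I verify that the adjoint $\check\Gam^*:\vn(G)\to\vn(G)\otimes^{w^*h}\vn(G)$ is a complete isometry, i.e.\ that for each $n$ and each $[T_{ij}]$ in $\matr_n(\vn(G))$,
\[
\norm{\check\Gam^{*(n)}[T_{ij}]}_{\fC\fB(\fB(\fH),\matr_n(\fB(\fH)))}=\norm{[T_{ij}]}_{\matr_n(\vn(G))}.
\]

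First I would write down $\check\Gam^{*(n)}$ explicitly from (\ref{eq:cgamadjoint}): the amplified map sends $A$ to $\int_G[\lam(s)A\lam(s^{-1})T_{ij}]\,ds$, so that the completely bounded norm is computed as a supremum of $\bigl|\int_G\inprod{[\lam(s)A_{kl}\lam(s^{-1})T_{ij}]\xi}{\eta}\,ds\bigr|$ over contractive $[A_{kl}]$ in $\matr_m(\fB(\fH))$ and unit vectors $\xi,\eta$ in $\fH^{mn}$. The key difference from the $\Gam$ case is that the two copies of $\lam(s)$ now flank $A$ as $\lam(s)A\lam(s^{-1})$ rather than appearing on the same side, so the relevant averaging operator is the \emph{inner} conjugation $A\mapsto\int_G\lam(s)A\lam(s^{-1})\,ds$. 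By Proposition \ref{prop:expectation} this conjugation is the tracial expectation onto $\cent\vn(G)$, collapsing each block to $\frac{1}{d_\pi}\tr(\cdot)I_\pi$; I expect this to be where the normalising factors get absorbed and the estimate cleans up.

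The main steps are then: (1) bound the supremum from above by Cauchy--Schwarz in the $G$-integral, exactly as in the proof of Theorem \ref{theo:gamonhaag}, reducing to $\norm{\int_G\sum_{k}[\lam(s)T_{ki}^*\lam(s^{-1})\lam(s)T_{kj}\lam(s^{-1})]\,ds}^{1/2}$ or a similar inner-conjugation expression; (2) evaluate this using Proposition \ref{prop:expectation} and the Schur orthogonality relations to see that the projection $\int_G\lam(s)\otimes\lam(s^{-1})\,ds$ now lands on the diagonal components of $\vn(G)\otimes^{w^*h}\vn(G)$, yielding precisely $\norm{[T_{ij}]}_{\matr_n(\vn(G))}$; and (3) establish the matching lower bound, so that the inequality is an equality and $\check\Gam^{*}$ is a complete isometry. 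Lemma \ref{lem:opspace} then converts this into the statement that $\check\Gam$ is a complete quotient map onto a dense subspace of $\fal(G)$ carrying the $\fal(G)$ operator space structure; since that subspace contains $\trig(G)$ and is complete, it equals $\fal(G)$.

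The hard part will be Step (3), the lower bound: for $\Gam$ the authors only needed the upper estimate (\ref{eq:dualineq}) because they already knew the target space $\fdelg$ abstractly, whereas here I must show the supremum is actually \emph{attained} at the value $\norm{[T_{ij}]}_{\matr_n(\vn(G))}$. I anticipate producing a near-optimal choice of $[A_{kl}]$ and $\xi,\eta$ — plausibly with $[A_{kl}]$ built from the optimal operators witnessing the $\matr_n(\vn(G))$-norm and $\xi,\eta$ chosen so that the averaging against $\lam(s)\otimes\lam(s^{-1})$ does not dissipate mass — and checking that the resulting value approaches $\norm{[T_{ij}]}_{\matr_n(\vn(G))}$. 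Care will be needed in tracking the Hilbert-space conjugation and transpose bookkeeping (as flagged in Lemma \ref{lem:checkdual} and the closing remark of Theorem \ref{theo:opsponfdel} about the bilinear pairing), but since $\check\Gam$ involves no $\check{T}$ twist the conjugate-representation issues should be milder than in the $\Gam$ case.
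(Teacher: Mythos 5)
Your proposal is correct and follows essentially the same route as the paper: pass to the adjoint via (\ref{eq:cgamadjoint}), recognize $A\mapsto\int_G\lam(s)A\lam(s^{-1})\,ds$ as the completely contractive tracial expectation of Proposition \ref{prop:expectation}, show $\check{\Gam}^*$ is a complete isometry, and conclude via Lemma \ref{lem:opspace} and the density of $\trig(G)$. The one place you overestimate the difficulty is your Step (3): no near-optimal $[A_{kl}]$ is needed, since evaluating $\check{\Gam}^{*(n)}[T_{ij}]$ at $A=I$ returns $[T_{ij}]$ itself (the expectation fixes $I$), giving the lower bound $\norm{\check{\Gam}^{*(n)}[T_{ij}]}_{cb}\geq\norm{[T_{ij}]}_{\matr_n(\vn(G))}$ immediately; likewise the upper bound follows in one line by factoring $\check{\Gam}^*(T)=(I\otimes T)\circ\int_G\lam(s)\otimes\lam(s^{-1})\,ds$ through the composition product (\ref{eq:cbcomposition}), rather than via the Cauchy--Schwarz computation you import from Theorem \ref{theo:gamonhaag}.
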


\begin{proof}
Let $T\in\vn(G)$.  Then $\check{\Gam}^*(T)$, via the identification
(\ref{eq:fahtpdual}) and using the composition product of (\ref{eq:cbcomposition}),  factors as
\begin{gather*}
\check{\Gam}^*(T)=\int_G\lam(s)\otimes\lam(s^{-1})T\, ds
=(I\otimes T)\circ\int_G\lam(s)\otimes\lam(s^{-1})\, ds \\
\text{i.e.\ } \check{\Gam}^*(T)(A)=\left(\int_G \lam(s)A\lam(s^{-1})\,ds\right)T.
\end{gather*}
Hence if $[T_{ij}]\in\matr_n(\vn(G))$ then, since the map from Proposition \ref{prop:expectation}
is an expectation, hence completely contractive, we have
\begin{align*}
&\norm{\check{\Gam}^{*(n)}[T_{ij}]}_{\fC\fB(\fB(\fH),\matr_n(\fB(\fH)))} \\
&\phantom{mmm}\leq\norm{A\mapsto [AT_{ij}]}_{\fC\fB(\fB(\fH),\matr_n(\fB(\fH)))}
\norm{A\mapsto \int_G\lam(s)A\lam(s^{-1})\,ds}_{\fC\fB(\fB(\fH))} \\
&\phantom{mmm}\leq\norm{[T_{ij}]}_{\matr_n(\vn(G))}.
\end{align*}
Conversely, inspecting this operator at $A=I$ we observe
\[
\norm{\check{\Gam}^{*(n)}[T_{ij}]}_{\fC\fB(\fB(\fH),\matr_n(\fB(\fH)))}
\geq \norm{[T_{ij}]}_{\matr_n(\vn(G))}.
\]
Thus equality holds.  

Since $\check{\Gam}(\fal(G)\otimes^h\fal(G))$ contains
$\trig(G)$ which is dense in $\fal(G)$, and since $\check{\Gam}^*$ is a complete isometry, 
the desired results hold.
\end{proof}

We obtain an immediate corollary of Theorem  \ref{theo:cgamonhaag}, by
using the result \cite{blecher} 
(see \cite[5.2.1]{blecherl}) that any algebra and operator space
$\fA$ whose product completely boundedly 
factors through $\fA\otimes^h\fA$ is completely isomorphic
to an operator algebra, i.e., an algebra of operators acting on a Hilbert space.

\begin{corollary}
The convolution algebra $(\fal(G),\ast)=(\fal(G),\check{\Gam})$ is 
completely isomorphic to an operator algebra.
\end{corollary}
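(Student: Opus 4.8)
The plan is to apply the cited theorem of Blecher \cite{blecher} (see \cite[5.2.1]{blecherl}) essentially verbatim: it asserts that any algebra $\fA$ which is an operator space and whose multiplication factors completely boundedly through the Haagerup tensor product $\fA\otimes^h\fA$ is completely isomorphic to an operator algebra. The entire task therefore reduces to exhibiting the convolution product of $\fal(G)$ as exactly such a completely bounded factorization, and essentially all of the analytic work has already been done in Theorem \ref{theo:cgamonhaag}.

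First I would confirm that $(\fal(G),\ast)$ is a genuine associative algebra in its given operator space structure. Associativity of $\ast$ is inherited from the group multiplication and is standard. Closure under convolution, namely $u\ast v\in\fal(G)$ for $u,v\in\fal(G)$, is not assumed a priori but follows immediately from Theorem \ref{theo:cgamonhaag}: since $u\ast v=\check{\Gam}(u\otimes v)$ and $\check{\Gam}(\fal(G)\otimes^h\fal(G))=\fal(G)$, the product never leaves $\fal(G)$.

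Next I would observe that the bilinear convolution map $(u,v)\mapsto u\ast v$ linearises to precisely the map $\check{\Gam}:\fal(G)\otimes^h\fal(G)\to\fal(G)$, because on elementary tensors $\check{\Gam}(u\otimes v)=u\ast v$, as recorded in the introduction. By Theorem \ref{theo:cgamonhaag} this map is a complete quotient map; in particular it is completely bounded (indeed completely contractive). This is exactly the statement that the product of $\fal(G)$ factors completely boundedly through its Haagerup tensor square, so Blecher's criterion applies directly and yields the claimed complete isomorphism with an operator algebra.

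The step requiring the most care is not analytic but a matter of bookkeeping: one must check that the operator space structure we have fixed on $\fal(G)$ (as the distinguished predual of $\vn(G)$) together with the Haagerup tensor product $\fal(G)\otimes^h\fal(G)$ defined via the duality (\ref{eq:fahtpdual}) are precisely the data to which Blecher's theorem is stated to apply, and that our $\check{\Gam}$ genuinely plays the role of the \emph{multiplication} map demanded there. Since Theorem \ref{theo:cgamonhaag} already supplies the complete boundedness of the factoring map, no genuine obstacle remains beyond this matching of hypotheses and the routine verification of associativity that renders the target ``operator algebra'' meaningful.
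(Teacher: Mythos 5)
Your proposal is correct and follows exactly the route the paper takes: the corollary is deduced by combining Theorem \ref{theo:cgamonhaag} (which shows convolution factors through $\fal(G)\otimes^h\fal(G)$ as the complete quotient map $\check{\Gam}$) with Blecher's characterization of operator algebras. The extra bookkeeping you flag (associativity, closure under $\ast$, matching of hypotheses) is routine and implicit in the paper's one-line derivation.
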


In fact, by \cite[5.2.8]{blecherl}, we see that the representation $\rho:(\falg,\ast)\to\fB(\fK)$
can be set to satisfy $\norm{\rho}_{\mathrm{cb}}\leq 2$ and $\norm{\rho^{-1}}_{\mathrm{cb}}\leq 1$
(the latter on $\rho(\falg)$).  Since $(\falg,\ast)$ is a Segal algebra in $\mathrm{L}^1(G)$, it does not
admit a bounded approximate identity (\cite[Theo.\ 1.2]{burnham}), in particular it does not admit
a contractive approximate identity.  Hence we do not have enough information
to establish if $\rho$ can be made a complete isometry (as would follow from the 
Blecher-Ruan-Sinclair theorem as stated in \cite[2.3.2]{blecherl}, for example).

This stands in mild contrast to  \cite[5.5.8]{blecherl}, where it is shown that
the `matrix' algebra $\smat^1_\infty$ is not an operator algebra.  Of course, the global
structure of $(\fal(G),\ast)$ is much different, since convolution on
$\trig_\pi\otimes\trig_\pi\cong d_\pi\smat^1_{d_\pi}\otimes d_\pi\smat^1_{d_\pi}$ is really
matrix multiplication times a scalar factor $\frac{1}{d_\pi}$ (see \cite[(27.20)]{hewittrII}); hence
we obtain a contraction $d_\pi\smat^1_{d_\pi}\otimes^hd_\pi\smat^1_{d_\pi}\to
d_\pi\smat^1_{d_\pi}$.

Let us close this section
with a remark on convolution applied to $\fal(G\times G)$.  The methods are very close 
to those of \cite[Thm.\ 4.1]{forrestss}, but we make the extra effort to gain the operator space structure.

\begin{proposition}
We have a completely isometric identification
\[
\check{\Gam}(\fal(G\times G))=\ell^1\text{-}\bigoplus_{\pi\in\what{G}}d_\pi^2\smat^1_{d_\pi}
\]
where we regard this space as a complete quotient space of $\fal(G\times G)$ by $\check{\Gam}$.
\end{proposition}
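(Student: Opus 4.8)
The plan is to follow the template of Theorem~\ref{theo:opsponfdel}: realise $\check{\Gam}(\fal(G\times G))$ as a complete quotient of $\fal(G)\hat{\otimes}\fal(G)$ via Lemma~\ref{lem:opspace}, and compute its operator space structure through the adjoint. Recall from \cite{effrosr0} (see \cite[7.2.4]{effrosr}) that $(\fal(G)\hat{\otimes}\fal(G))^*\cong\vn(G)\bar{\otimes}\vn(G)$, so I would study $\check{\Gam}^*\colon\vn(G)\to\vn(G)\bar{\otimes}\vn(G)$ together with its amplifications $\check{\Gam}^{*(n)}\colon\matr_n(\vn(G))\to\vn(G)\bar{\otimes}\matr_n(\vn(G))$. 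Once the completely isometric image of $\check{\Gam}^*$ is identified, Lemma~\ref{lem:opspace} pins down $\check{\Gam}(\fal(G\times G))^*$, and the proposition follows by passing to the predual.

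For the core computation I would start from (\ref{eq:cgamadjoint}) and note that, since right multiplication by $I\otimes T$ is weak*-continuous, the weak*-integral factors as
\[
\check{\Gam}^*(T)=\int_G\lam(s)\otimes\lam(s^{-1})T\,ds=P_0\,(I\otimes T),\qquad P_0:=\int_G\lam(s)\otimes\lam(s^{-1})\,ds.
\]
The next step is to evaluate $P_0$ blockwise on $\fB(\fH_{\pi'})\bar{\otimes}\fB(\fH_\pi)$. The Schur orthogonality relations give $\int_G\pi'(s)\otimes\pi(s^{-1})\,ds=0$ when $\pi'\neq\pi$, and for $\pi'=\pi$ this integral equals $\frac{1}{d_\pi}F_\pi$, where $F_\pi$ is the self-adjoint unitary flip $x\otimes y\mapsto y\otimes x$ on $\fH_\pi\otimes\fH_\pi$. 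Hence $P_0=\bigoplus_{\pi\in\what{G}}\frac{1}{d_\pi}F_\pi$ is supported on the diagonal blocks, a scalar multiple of a unitary on each. Amplifying, $\check{\Gam}^{*(n)}[T_{ij}]=(P_0\otimes I_n)(I\otimes[T_{ij}])$, and because each $F_\pi$ is unitary I would read off
\[
\norm{\check{\Gam}^{*(n)}[T_{ij}]}_{\vn(G)\bar{\otimes}\matr_n(\vn(G))}=\sup_{\pi\in\what{G}}\frac{1}{d_\pi}\norm{[T_{ij,\pi}]}_{\matr_n(\fB(\fH_\pi))}.
\]
Equivalently, one can run the $\check{\Gam}^{*(n)}[T_{ij}]^*\check{\Gam}^{*(n)}[T_{ij}]$ computation exactly as in Theorem~\ref{theo:opsponfdel}, invoking Proposition~\ref{prop:expectation} twice; the flip-operator identity is just the compact form of the same orthogonality.

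By Lemma~\ref{lem:opspace}, $\vn(G)$ equipped with the norm just computed is completely norming for the quotient $\check{\Gam}(\fal(G\times G))$, so the latter is completely isometric to the predual of $\ell^\infty\text{-}\bigoplus_{\pi\in\what{G}}\frac{1}{d_\pi}\fB(\fH_\pi)$; since $\trig(G)$ sits densely in the range, this predual is realised as the function space claimed. The one delicate point is the predual weight. The pairing (\ref{eq:trigdual}) carries the factor $d_\pi$, so a dual block normed by $\frac{1}{d_\pi}\norm{\cdot}_{\fB(\fH_\pi)}$ dualises, through $d_\pi\tr(\hat{u}(\pi)\,\cdot\,)$, to a predual block normed by $d_\pi\cdot d_\pi\norm{\hat{u}(\pi)}_{\smat^1}=d_\pi^2\norm{\hat{u}(\pi)}_{\smat^1}$. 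This extra factor of $d_\pi$ from the pairing --- precisely the mechanism that turns $\frac{1}{d_\pi^{1/2}}$ into $d_\pi^{3/2}$ in Theorem~\ref{theo:opsponfdel} --- produces the weight $d_\pi^2$ rather than the naive $d_\pi$, giving $\check{\Gam}(\fal(G\times G))=\ell^1\text{-}\bigoplus_{\pi\in\what{G}}d_\pi^2\smat^1_{d_\pi}$. I expect this weight bookkeeping to be the main subtlety, since the operator-space duality $(\smat^1_{d})^*=\fB(\fH)$ is weight-free while our pairing is not; the norm computation itself is routine once $P_0$ is recognised as a scaled flip.
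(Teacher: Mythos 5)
Your proposal is correct and follows essentially the same route as the paper: factor $\check{\Gam}^{*}(T)$ as $P_0(I\otimes T)$, use Schur orthogonality to see that $P_0$ is supported on the diagonal blocks where it equals $\frac{1}{d_\pi}$ times a unitary (the paper calls it a permutation matrix $U_\pi$; your identification of it as the flip $F_\pi$ is exactly right), deduce the completely isometric embedding $\check{\Gam}^*(\vn(G))\subseteq\ell^\infty\text{-}\bigoplus_{\pi}\frac{1}{d_\pi}\smat^\infty_{d_\pi}$, and dualise via Lemma~\ref{lem:opspace}. Your explicit bookkeeping of how the weight $d_\pi$ in the pairing (\ref{eq:trigdual}) turns the dual weight $\frac{1}{d_\pi}$ into the predual weight $d_\pi^2$ is a correct elaboration of the step the paper compresses into ``the desired result follows.''
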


We remark that the space above was denoted $\fgamg$ in \cite{johnson}.  In
\cite[Def.\ 2.6]{lees} $\fgamg$ is regarded as a Beurling-Fourier algebra, 
and is given the same operator 
space structure, though in terms of a certain weighted dual pairing with $\vn(G)$,
which is a different perspective than the one taken here.

\begin{proof}
As above, we need only determine the norm of $\check{\Gam}^{*(n)}[T_{ij}]$ in \linebreak
$\matr_n(\vn(G)\bar{\otimes}\vn(G))\cong\matr_n(\vn(G\times G))$, for
$[T_{ij}]$ in $\matr_n(\vn(G))$.  Using (\ref{eq:cgamadjoint}) and factoring in the product
in $\mat_n(\vn(G)\bar{\otimes}\vn(G))$ we have
\begin{align*}
\check{\Gam}^{*(n)}[T_{ij}]&=\left(\int_G \lam(s)\otimes\lam(s^{-1})\, ds\right)^{(n)}[I\otimes T_{ij}] \\
&=\bigoplus_{\pi\in\what{G}}\left(\int_G\pi(s)\otimes\pi(s^{-1})\,ds\right)^{(n)}[I\otimes T_{ij,\pi}].  
\end{align*}
It is straightforwrd to calculate that each $\int_G\pi(s)\otimes\pi(s^{-1})\,ds=\frac{1}{d_\pi}U_\pi$
where $U_\pi$ is a unitary, in fact a permutation matrix.  Thus we obtain a completely
isometric embedding
\[
\check{\Gam}^*(\vn(G))
\subseteq\ell^\infty\text{-}\bigoplus_{\pi\in\what{G}}\frac{1}{d_\pi}\smat_{d_\pi}^\infty
\]
and the desired result follows.
\end{proof}

\section{Comparison of results}\label{sec:comparison}

We let $\fC(G)$ denote the space of continuous functions on $G$.  The
{\it Varopoulos algebra} is given by
\[
\mathrm{V}(G\times G)=\fC(G)\otimes^\gam\fC(G)=\fC(G)\hat{\otimes}\fC(G)
=\fC(G)\otimes^h\fC(G)
\]
where isomorphic equality of the spaces is provided by Grothendieck's inequality
\cite[14.5]{defantf}.  We shall take $\otimes^h$ to define our canonical 
norm on $\mathrm{V}(G\times G)$.
Since the map $u\mapsto\check{u}$ is a complete isometry
on $\fC(G)$, we have that $\check{\Gam}(\mathrm{V}(G\times G))=
\Gam(\mathrm{V}(G\times G))$ completely isometrically.  We recall that
$\falg\hat{\otimes}\falg=\fal(G\times G)$ completely isometrically, by virtue of
the facts that $(\falg\hat{\otimes}\falg)^*=\vn(G)\bar{\otimes}\vn(G)$,
as indicated in the proof of Theorem \ref{theo:opsponfdel}, and the latter
space is $\vn(G\times G)$; while
$\falg\otimes^\gam\falg=\fal(G\times G)$ isomorphically only when
$G$ has an abelian subgroup of finite index \cite{losert}, and isometrically only when
$G$ is abelian.  The fact that $u\mapsto\check{u}$
is an isometry on $\fal(G)$ means that $\Gam(\falg\otimes^\gam\falg)=
\check{\Gam}(\falg\otimes^\gam\falg)$.

\begin{tabular}{|c|c|c|l|}\hline
algebra &   image under $\Gam$ & image under $\check{\Gam}$ &  references \\ \hline\hline
$\mathrm{V}(G\times G)$ & $\falg$ & $\falg$ &  \cite{spronkt} \\
$\falg\otimes^h\falg$ & $\fdelg$ & $\falg$ & \S\ref{sec:twistedconvolution}, \S\ref{sec:convolution} \\
$\fal(G\times G)$ & $\fdelg$ &  $\fgamg$ & \cite{forrestss} (\S\ref{sec:twistedconvolution}, 
\S\ref{sec:convolution}) \\
$\falg\otimes^\gam\falg$ & $\fgamg$ & $\fgamg$ & \cite{johnson}  \\ \hline
\end{tabular}

\noindent Notice that in each of the first three rows, $\Gam$ and $\check{\Gam}$ can
be regarded as a complete quotient map, as shown in \S\ref{sec:twistedconvolution} and 
\S\ref{sec:convolution}, above.

Let us use Theorems  \ref{theo:gamonhaag} and \ref{theo:cgamonhaag} 
to observe some further connections
between $\fal(G\times G)$ and $\fal(G)\otimes^h\fal(G)$, and also between
$\mathrm{V}(G\times G)$ and $\fal(G)\otimes^h\fal(G)$.
This addresses a question asked in \cite[p.\ 21]{daws}.  
We use the same definitions as in \cite{spronkt,forrestss}.
The equivalence of (i), (ii) and (iii) below, is an immediate consequence of
 \cite[Theo.\ 1.4]{forrestss}.  The equivalence of (i'), (ii') and (iii')
is proved essentially as \cite[Thm.\ 3.1]{spronkt}; see 
\cite[Lem.\ 2.3]{forrestss2}.  We refer the reader to those sources for the proof.

\begin{proposition}\label{theo:synthesis}
Let $\theta,\check{\theta}:G\times G\to G$ be given by
$\theta(s,t)=st^{-1}$ and $\check{\theta}(s,t)=st$.
Also, let $E\subset G$ be closed.  Then the following are equivalent:

{\bf (i)} $E$ is a set of spectral synthesis for $\fdelg$;

{\bf (ii)} $\theta^{-1}(E)$ is a set of spectral synthesis for $\fal(G\times G)$;

{\bf (iii)} $\theta^{-1}(E)$ is a set of spectral synthesis for $\fal(G)\otimes^h\fal(G)$.

\noindent Also, the following are equivalent:

{\bf (i')} $E$ is a set of spectral synthesis for $\fal(G)$;

{\bf (ii')} $\check{\theta}^{-1}(E)$ is a set of spectral synthesis for $\mathrm{V}(G\times G)$;

{\bf (iii')} $\check{\theta}^{-1}(E)$ is a set of spectral synthesis for $\fal(G)\otimes^h\fal(G)$.
\end{proposition}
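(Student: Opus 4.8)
The plan is to read both trios of equivalences as applications of a single synthesis-transfer principle: if a Fourier-type algebra is exhibited as a complete quotient of a larger one by a map whose induced map on spectra is $\theta$ (respectively $\check{\theta}$), then a closed set is synthetic downstairs precisely when its full preimage is synthetic upstairs. The new content is then almost free: Theorems \ref{theo:gamonhaag} and \ref{theo:cgamonhaag} show that $\Gam$ and $\check{\Gam}$ exhibit $\fdelg$ and $\falg$ as complete quotients of $\falg\otimes^h\falg$ in exactly the way they already do for $\falg\hat{\otimes}\falg$ and $\mathrm{V}(G\times G)$, so the Haagerup statements (iii) and (iii') should fall out of the same principle that yields (ii) and (ii').

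First I would identify the spectral maps. The map $\theta(s,t)=st^{-1}$ is the quotient $G\times G\to(G\times G)/\Del\cong G$, $(s,t)\Del\mapsto st^{-1}$, so that $\Gam$ averages over the homogeneous space $(G\times G)/\Del$, $\fdelg$ is its Fourier algebra, and $\theta^{-1}(E)$ is the $\Del$-saturated preimage of $E$; dually $\check{\theta}(s,t)=st$ is the map onto the orbit space averaged by $\check{\Gam}$, with $\check{\theta}^{-1}(E)$ saturated for that action. In both cases the fibres are orbits of the compact group $G$, so Proposition \ref{prop:expectation} furnishes the averaging (conditional-expectation) operators on which the transfer rests. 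I would also record that $\falg\otimes^h\falg$ is a regular semisimple function algebra on $G\times G$ (semisimplicity having been established above), so that spectral synthesis there is well posed.

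For the unprimed trio, both (i) $\Leftrightarrow$ (ii) and (i) $\Leftrightarrow$ (iii) are instances of the transfer principle in the form of \cite[Thm.\ 1.4]{forrestss}: the first applied to the complete quotient $\Gam:\falg\hat{\otimes}\falg\to\fdelg$, the second to $\Gam:\falg\otimes^h\falg\to\fdelg$. The principle sees only the complete quotient map $\Gam$, its complete-isometric adjoint $\Gam^*$ (the same formula (\ref{eq:gamadjoint}) in both cases), and the spectral map $\theta$; Theorem \ref{theo:gamonhaag} verifies that these data, and the resulting operator space on $\fdelg$, are identical for the two tensor products, so the second application is legitimate. Transitivity gives (ii) $\Leftrightarrow$ (iii). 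The primed trio is the exact parallel with $\falg$ as base and $\check{\Gam}$ as quotient map: here the principle is the Spronk--Turowska correspondence between synthesis in $\falg$ and synthesis in the Varopoulos algebra, equivalently operator synthesis, namely \cite[Thm.\ 3.1]{spronkt} (see \cite[Lem.\ 2.3]{forrestss2}); applying it to $\check{\Gam}:\mathrm{V}(G\times G)\to\falg$ gives (i') $\Leftrightarrow$ (ii'), and applying it to $\check{\Gam}:\falg\otimes^h\falg\to\falg$, whose complete-quotient structure is Theorem \ref{theo:cgamonhaag}, gives (i') $\Leftrightarrow$ (iii').

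The heart of the matter, and the main obstacle, is the transfer principle itself rather than its application. It is false for an arbitrary bounded quotient; what makes it work is a bijection between the closed ideals of the base and the closed $\Del$-invariant ideals of the domain, under which the ``large'' ideal with hull $E$ corresponds to the large ideal with hull $\theta^{-1}(E)$ and likewise for the ``small'' ideals, together with the fact that an arbitrary closed ideal with hull $\theta^{-1}(E)$ can be averaged---via the conditional expectation of Proposition \ref{prop:expectation}, using that $\theta^{-1}(E)$ is saturated---into the invariant setting without changing whether synthesis holds. Establishing this correspondence is exactly the work done in \cite{forrestss,spronkt}. The one point that is genuinely new here is that replacing the projective norm by the Haagerup norm might a priori create additional closed ideals; Theorems \ref{theo:gamonhaag} and \ref{theo:cgamonhaag} dispose of this by making $\Gam$, $\check{\Gam}$ and their adjoints literally the same data onto the same $\fdelg$ and $\falg$, so the established arguments transfer verbatim.
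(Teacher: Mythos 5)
Your proposal is correct and takes essentially the same route as the paper, which likewise reduces the unprimed trio to \cite[Theo.\ 1.4]{forrestss} and the primed trio to \cite[Thm.\ 3.1]{spronkt} (cf.\ \cite[Lem.\ 2.3]{forrestss2}), with Theorems \ref{theo:gamonhaag} and \ref{theo:cgamonhaag} supplying the complete quotient structure that makes the Haagerup cases fall under the same transfer arguments. Your additional commentary on why the ideal-correspondence principle works is a faithful gloss on those sources rather than a different proof.
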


It is well-known, see for example \cite{herz}, that point sets are spectral for
$\fal(G)$.  Since $\check{\theta}^{-1}(\{e\})=\{(s,s^{-1}):s\in G\}$, we gain the following.

\begin{corollary}
The anti-diagonal $\check{\Delta}=\{(s,s^{-1}):s\in G\}$ is a set of 
spectral synthesis for $\fal(G)\otimes^h\fal(G)$.
\end{corollary}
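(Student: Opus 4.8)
The plan is to derive the Corollary directly from Proposition \ref{theo:synthesis}, treating it as essentially a bookkeeping exercise that combines two previously established facts. First I would invoke the classical result, cited in the excerpt via \cite{herz}, that singleton sets are always sets of spectral synthesis for $\fal(G)$. Applying this to the identity element, I take $E=\{e\}$, which is trivially closed. Then condition (i') of Proposition \ref{theo:synthesis} is satisfied for this choice of $E$.

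The second step is to compute the preimage $\check{\theta}^{-1}(\{e\})$ explicitly. Since $\check{\theta}(s,t)=st$, the preimage $\check{\theta}^{-1}(\{e\})$ consists of all pairs $(s,t)$ with $st=e$, i.e.\ $t=s^{-1}$, which is precisely the anti-diagonal $\check{\Delta}=\{(s,s^{-1}):s\in G\}$. This set is closed in $G\times G$ because $\check{\theta}$ is continuous and $\{e\}$ is closed.

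Having established that $E=\{e\}$ satisfies (i'), the equivalence of (i'), (ii') and (iii') in Proposition \ref{theo:synthesis} immediately yields (iii'): namely, $\check{\theta}^{-1}(\{e\})$ is a set of spectral synthesis for $\fal(G)\otimes^h\fal(G)$. Substituting the identification $\check{\theta}^{-1}(\{e\})=\check{\Delta}$ from the previous step gives exactly the desired conclusion.

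I do not anticipate any serious obstacle here, since all the substantive work has been front-loaded into Proposition \ref{theo:synthesis} and into the classical spectral-synthesis result for points. The only point requiring a small amount of care is verifying that the hypotheses of Proposition \ref{theo:synthesis} are genuinely met by the singleton $\{e\}$ — in particular that it is closed — and correctly matching the notation $\check{\theta}(s,t)=st$ to produce the anti-diagonal rather than, say, the diagonal. Given how short the argument is, I would present it as a one-line deduction: apply \cite{herz} with $E=\{e\}$, observe $\check{\theta}^{-1}(\{e\})=\check{\Delta}$, and conclude via the (i')$\Leftrightarrow$(iii') equivalence.
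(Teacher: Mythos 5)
Your argument is correct and is precisely the paper's own deduction: the authors cite \cite{herz} for synthesis of singletons in $\fal(G)$, note that $\check{\theta}^{-1}(\{e\})=\check{\Delta}$, and conclude via the equivalence of (i') and (iii') in Proposition \ref{theo:synthesis}. Nothing to add.
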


 This stands in marked contrast to the case
for $\fal(G\times G)$:  $\check{\Delta}$ is a set of spectral synthesis for $\fal(G\times G)$
if and only if the connected component of the identity $G_e$ is abelian (\cite[Thm.\ 2.5]{forrestss2}).

\bigskip
{\sc Acknowledgement.}  The authors are grateful to the anonymous referee for making valuable
comments which improved the exposition of the paper.

\end{document}